\documentclass{amsart}

\usepackage{lipsum}
\usepackage{amsfonts}
\usepackage{graphicx}
\usepackage{epstopdf}
\usepackage{algorithm}

\usepackage{amsmath,amscd,amssymb}
\usepackage{xfrac}
\usepackage{url}
\usepackage{mathtools}
\usepackage{color}
\usepackage{subcaption}
\usepackage[pagebackref,colorlinks,citecolor=blue,linkcolor=magenta]{hyperref}

\usepackage[capitalize]{cleveref}
\usepackage{tikz}
\usetikzlibrary{calc}
\usepackage{color}
\usepackage{centernot}
\usepackage{enumitem}
\usepackage[utf8]{inputenc}
\usepackage{algpseudocode}

\newcommand{\GG}{\mathcal{G}}
\newcommand{\HH}{\mathcal{H}}
\newcommand{\DD}{\mathcal{D}}
\newcommand{\SSS}{\mathcal{S}}
\newcommand{\RR}{\mathbb{R}}

\renewcommand{\ne}{\operatorname{ne}}

\makeatletter
\DeclareMathAlphabet{\@mymathbb}{U}{bbold}{m}{n}
\newcommand{\zero}{\@mymathbb{0}}
\newcommand{\one}{\@mymathbb{1}}
\makeatother

\DeclareMathOperator{\pa}{pa}
\DeclareMathOperator{\ch}{ch}
\DeclareMathOperator{\de}{de}

\DeclareMathOperator{\nd}{nd}

\DeclareMathOperator{\BIC}{BIC}

\newcommand\independent{\protect\mathpalette{\protect\independenT}{\perp}}
\def\independenT#1#2{\mathrel{\rlap{$#1#2$}\mkern2mu{#1#2}}}

\def\newop#1{\expandafter\def\csname #1\endcsname{\mathop{\rm
#1}\nolimits}}
\newop{STAB}
\newop{conv}
\newop{CIM}
\newop{odd}
\newop{even}

\usepackage{enumitem}
\setlist[enumerate]{leftmargin=.5in}
\setlist[itemize]{leftmargin=.5in}


\newtheorem{theorem}{Theorem}[section]
\newtheorem{proposition}[theorem]{Proposition}
\newtheorem{corollary}[theorem]{Corollary}
\newtheorem{lemma}[theorem]{Lemma}
\newtheorem{conjecture}[theorem]{Conjecture}

\theoremstyle{definition}
\newtheorem{definition}[theorem]{Definition}

\theoremstyle{remark}

\theoremstyle{plain}
\newtheorem{example}[theorem]{Example} 

\title{Greedy Causal Discovery is Geometric}

\author{Svante Linusson
\and Petter Restadh
\and Liam Solus
}

\email[Svante Linusson]{linusson@math.kth.se}
\email[Petter Restadh]{petterre@kth.se}
\email[Liam Solus]{solus@kth.se}

\address{Department of Mathematics\\
    KTH Royal Institute of Technology\\
    SE-100 44 Stockholm, Sweden}

\begin{document}
\maketitle

\begin{abstract}
Finding a directed acyclic graph (DAG) that best encodes the conditional independence statements observable from data is a central question within causality.
Algorithms that greedily transform one candidate DAG into another given a fixed set of moves have been particularly successful, for example the GES, GIES, and MMHC algorithms.
In 2010, Studen\'y, Hemmecke and Lindner introduced the characteristic imset polytope, $\operatorname{CIM}_p$, whose vertices correspond to Markov equivalence classes, as a way of transforming causal discovery into a linear optimization problem.
We show that the moves of the aforementioned algorithms are included within classes of edges of $\operatorname{CIM}_p$ and that restrictions placed on the skeleton of the candidate DAGs correspond to faces of $\operatorname{CIM}_p$.
Thus, we observe that GES, GIES, and MMHC all have geometric realizations as greedy edge-walks along $\operatorname{CIM}_p$.
Furthermore, the identified edges of $\operatorname{CIM}_p$ strictly generalize the moves of these algorithms.
Exploiting this generalization, we introduce a greedy simplex-type algorithm called \emph{greedy CIM}, and a hybrid variant, \emph{skeletal greedy CIM}, that outperforms current competitors among hybrid and constraint-based algorithms.

\end{abstract}

\section{Introduction}
\label{sec: intro}
The use of directed acyclic graphs (DAGs) to model complex systems has increased rapidly during the last thirty years, and today they are used in a wide variety of fields \cite{FLNP00, P00, BHR00, S01}.
Given a positive integer $p$ we let $[p]\coloneqq \{1, 2, \dots, p\}$.
To each DAG $\GG=([p], E)$ we associate a set of random variables $X_1,\dots, X_p$, and the conditional independence (CI) statements $X_i\independent  X_{\nd_\GG(i)\setminus \pa_\GG(i)}|X_{\pa_\GG(i)}$ for all $i\in[p]$.
Here, $\pa_\GG(i)$ denotes the parents and $\nd_\GG(i)$ denotes the non-descendants of $i$ in $\GG$.
A  joint probability distribution $P(X_1,\dots, X_p)$ is \emph{Markov} to a DAG $\GG$ if it entails all such CI statements.
The goal of causal discovery is to learn an unknown DAG $\GG=([p], E)$ from samples drawn from a joint distribution $P$ over $(X_1,\dots,X_p)$ that is assumed to be Markov to $\GG$.
Unfortunately, this cannot generally be done as multiple DAGs can encode the same set of CI statements.
Two such DAGs are called \emph{Markov equivalent}, and they belong to the same \emph{Markov equivalence class} (MEC). 
Thus, the basic problem of causal discovery is to identify the MEC of $\GG$, and a variety of \emph{causal discovery algorithms} for doing so have been proposed \cite{C02, HB12, SG91, T19}. 

Many of the more competitive algorithms are score-based and greedy, like the Greedy Equivalence Search (GES) \cite{C02}, or the Greedy Interventional Equivalence Search (GIES) applied to only observational data \cite{HB12}.
These algorithms aim to maximize a score function, such as the Bayesian Information Criterion (BIC). 
Others aim to recover the MEC from a collection of CI statements by treating causal discovery as a constraint-satisfaction problem, like the PC algorithm \cite{SG91, T19}. 
While the score-based methods tend to be more accurate on both simulated and real data, the constraint-based algorithms are usually faster.
More recent algorithms have tried using a hybrid approach, like Max-Min Hill Climbing (MMHC) \cite{TBA06}, where the authors restrict the search space by using CI tests and then take a greedy score-based approach. 
In the hybrid setting, one can leverage the speed of constraint-based methods versus the accuracy of score-based methods.

Alternatively, Studen\'y, Hemmecke and Lindner gave a geometric interpretation of MECs by realizing them as $0/1$-vectors called \emph{characteristic imsets} \cite{SHL10}. 
Maximizing a score equivalent and (additive) {decomposable} function over the MECs of DAGs on $p$ nodes then becomes equivalent to maximizing a linear function over these vectors.
Thus, finding the BIC-optimal MEC can be seen as a linear optimization problem over the \emph{characteristic imset (CIM) polytope}, $\CIM_p$.
This approach has also been used to learn decomposable models, with promising results \cite{SC17}.
While most research on these polytopes has focused on the identification of facets, our main focus will be their edges and other lower-dimensional faces.

We begin by showing that the reduced search space of the aforementioned popular hybrid and constraint-based algorithms are realized as faces of $\CIM_p$ (see \Cref{prop: graph interval face of cimp}). 
In \Cref{sec: edges}, we then identify classes of edges corresponding to, and strictly generalizing, the moves of GES, GIES, and MMHC.
Thus, we obtain a geometric interpretation of these algorithms as edge-walks along faces of a convex polytope.
A more recent hybrid algorithm called greedy SP \cite{SUW20} also admits a geometric interpretation as an edge-walk along a convex polytope. 
Since GES, GIES, MMHC, and greedy SP are currently the benchmark standards for greedy causal discovery algorithms based solely on observational data, we can then view greedy causal discovery as a purely geometric process; i.e., as an edge-walk along a convex polytope (see \Cref{thm: causal discovery is geometric}). 
Furthermore, as the characterized edges of $\CIM_p$ strictly generalize the moves of GES, GIES, and MMHC, we propose a hybrid algorithm that we call \emph{skeletal greedy CIM}  (\Cref{alg: ske greedy cim}) and a greedy score-based algorithm that we call \emph{greedy CIM} (\Cref{alg: greedy cim}).

In \Cref{sec: results}, we study how greedy CIM and skeletal greedy CIM perform on simulated data and compare their performance with the state-of-the-art. 
We observe that the additional moves given by the classified edges of $\CIM_p$ result in both the hybrid and purely score-based algorithms performing at least as well as all (respective) benchmark standards.  
In the case of hybrid algorithms, skeletal greedy CIM consistently outperforms all other hybrid alternatives.  
These observations purport the edges of the characteristic imset polytope as the natural object of study in efforts to improve the accuracy of modern causal discovery algorithms.  
The more technical proofs of the main theorems in \Cref{sec: edges} can be found in \Cref{sec: proofs edges}.

\section{Preliminaries}
\label{sec: prel}
For an introduction to the theory of convex polytopes, see for example \cite{Zie95}. 
We start with a brief summary of the graph theory notation used in the paper. All graphs are assumed to be simple.

Let $G=([p], E)$ be an undirected graph. 
For a pair of distinct nodes $i,j\in [p]$ we write $i-j\in G$ if $\{i,j\}\in E$.
We denote the set of neighbors of $i$ in $G$ by $\ne_G(i)$. 
For a directed graph $\GG=([p], E)$ we likewise write $i\to k\in \GG$ if $(i, k)\in E$. 
Then $i$ is said to be a \emph{parent} of $k$ and $k$ a \emph{child} of $i$. 
The sets of parents and children of $k$ in $\GG$ are denoted by $\pa_\GG(k)$ and $\ch_\GG(k)$ respectively. 
The \emph{skeleton} of a directed graph $\GG$ is the undirected graph $G$ where we replace $k\to i\in \GG$ with $k-i\in G$.
We say that two nodes are neighbors in $\GG$ if they are neighbors in the skeleton of $\GG$. 
For a directed graph $\GG$ we say that $\langle k_0, k_1,\dots, k_n\rangle$ is a \emph{directed path} from $k_0$ to $k_n$ in $\GG$ if $k_i \to k_{i+1}\in \GG$ for all $0\leq i\leq n-1$
and all $k_i$ different. 
We say that $\langle k_0, k_1,\dots, k_n\rangle$ is a \emph{path} in $\GG$ if it is a path in the skeleton of $G$.
Then a \emph{directed cycle} is a directed path with an extra edge $k_n\to k_0$ and a directed graph $\GG$ is a \emph{directed acyclic graph} (DAG) if $\GG$ does not have a directed cycle.
A node $i$ is a \emph{descendant} of $k$ if there exists a directed path from $k$ to $i$, and $i\neq k$.
The set of descendants is denoted $\de_\GG(k)$, and, by definition, does not include $k$. 
Every node that is not $k$, nor a descendant of $k$, is a \emph{non-descendant}, and the set of all such nodes is denoted $\nd_\GG(k)$.
The \emph{induced subgraph} on $A\subseteq [p]$ is denoted $\GG|_A$.
We recommend \cite{lauritzen1996} for a background on graphs and DAG models. 
\smallskip

A \emph{v-structure} is an induced subgraph of the form $i\to j\leftarrow k$.
The following is a classical result of Verma and Pearl.

\begin{theorem}
\label{thm: verma pearl}
\cite{VP92}
Two DAGs are Markov equivalent if and only if they have the same skeleton and the same v-structures.
\end{theorem}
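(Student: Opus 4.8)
The plan is to recast Markov equivalence in the purely combinatorial language of $d$-separation and then argue about paths. First I would invoke two standard facts about DAG models: \textbf{(A)} a distribution $P$ is Markov to $\GG$ (as defined above) if and only if $P$ satisfies $X_A \independent X_B \mid X_C$ for every $C$ that $d$-separates $A$ from $B$ in $\GG$ (the equivalence of the local and global Markov properties for DAGs); and \textbf{(B)} whenever $C$ does not $d$-separate $A$ from $B$ in $\GG$, some distribution Markov to $\GG$ has $X_A \dependent X_B \mid X_C$. By \textbf{(A)}, the family of distributions Markov to $\GG$ equals the family obeying every $d$-separation statement of $\GG$; combining this with \textbf{(B)}, the $d$-separation statements of $\GG$ are exactly the conditional independences common to all distributions Markov to $\GG$. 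Hence two DAGs are Markov equivalent if and only if they encode the same $d$-separation statements, and it suffices to prove the theorem with ``Markov equivalent'' replaced by ``encode the same $d$-separation statements.''

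For the forward implication, suppose $\GG_1$ and $\GG_2$ encode the same $d$-separations. If some edge $i-j$ lay in $\GG_1$ but not in $\GG_2$, then no set $d$-separates $i$ from $j$ in $\GG_1$, since an edge is never blocked; but in $\GG_2$ the non-adjacent pair $i,j$ is $d$-separated by $\pa_{\GG_2}(i)$ if $j$ is a non-descendant of $i$, and by $\pa_{\GG_2}(j)$ otherwise, which is nothing but the local Markov property at that vertex. This contradiction forces $\GG_1$ and $\GG_2$ to have the same skeleton. Next suppose $i\to m\leftarrow k$ is a v-structure of $\GG_1$ that is not one of $\GG_2$; since the skeletons agree, $i$ and $k$ are non-adjacent in both graphs, and in $\GG_2$ the triple $i-m-k$ being a non-collider forces $m$ to be a parent of $i$ or of $k$, say $m\in\pa_{\GG_2}(i)$ after possibly interchanging $i$ and $k$. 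Put $S\coloneqq\an_{\GG_2}(\{i,k\})\setminus\{i,k\}$. A moralization argument shows $S$ $d$-separates $i$ from $k$ in $\GG_2$: the only obstruction would be a common child of $i$ and $k$ lying in $\an_{\GG_2}(\{i,k\})$, and that would create a directed cycle. But $m\in\pa_{\GG_2}(i)\subseteq S$, so in $\GG_1$ the path $i\to m\leftarrow k$ has its unique collider opened by $S$ and is $d$-connecting, a contradiction. Hence $\GG_1$ and $\GG_2$ have the same v-structures.

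For the converse, assume $\GG_1$ and $\GG_2$ share a skeleton $H$ and the same v-structures; by symmetry it suffices to show that vertices $a$ and $b$ that are $d$-connected given $C$ in $\GG_1$ are $d$-connected given $C$ in $\GG_2$. I would fix such $a,b$ and a $d$-connecting path $\pi=(v_0=a,v_1,\dots,v_\ell=b)$ of minimum length, the minimum taken over all such pairs and paths. The key local fact is this: for an internal vertex $v_r$ whose $\pi$-neighbours $v_{r-1}$ and $v_{r+1}$ are non-adjacent in $H$, the assertion that $v_r$ is a collider of $\pi$ is exactly the assertion that $v_{r-1}\to v_r\leftarrow v_{r+1}$ is a v-structure, and by hypothesis this holds in $\GG_1$ if and only if it holds in $\GG_2$; thus along all such triples the collider/non-collider pattern of $\pi$ agrees in the two graphs. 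For a non-collider, being blocked is the orientation-free condition $v_r\in C$; for a collider $v_r\notin C$, the required descendant of $v_r$ in $C$ is witnessed by a directed path that transfers to $\GG_2$ by the same minimality analysis. Hence $\pi$ remains a $d$-connecting path given $C$ in $\GG_2$.

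The hard part will be the minimality analysis just used: one must show that a shortest $d$-connecting path may be assumed to have every internal triple $v_{r-1}-v_r-v_{r+1}$ with non-adjacent endpoints, and likewise that the directed paths witnessing open colliders may be taken chordless. The difficulty is that naively contracting $v_r$ along a chord $v_{r-1}-v_{r+1}$ can flip the collider status at $v_{r-1}$ or at $v_{r+1}$ and so block the path; the remedy is a case analysis on whether $v_r$ is a collider and on the orientations within the induced triangle on $\{v_{r-1},v_r,v_{r+1}\}$ --- orientations that must coincide in $\GG_1$ and $\GG_2$ precisely because a triangle contains no v-structure --- showing that such a shortcut, possibly repaired so that any newly created collider is opened by a vertex already forced into $C$ by the original path, always produces a strictly shorter $d$-connecting path, contradicting the choice of $\pi$. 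The rest is routine bookkeeping with the $d$-separation criterion.
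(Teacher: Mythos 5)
The paper does not prove this statement; it is quoted from Verma--Pearl \cite{VP92}, so there is no internal proof to compare against. Judged on its own terms, your reduction of Markov equivalence to ``same $d$-separation statements'' (via the local/global Markov equivalence and the completeness of $d$-separation) is the standard and correct first move, and your forward implication is essentially complete: the adjacency argument and the use of the ancestral separator $\an_{\GG_2}(\{i,k\})\setminus\{i,k\}$, with the observation that a common child in the ancestral set would force a directed cycle, are exactly right.

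The converse direction, however, is a plan rather than a proof, and it contains a genuine gap. You defer ``the minimality analysis'' --- showing that a shortest $d$-connecting path can be assumed to have non-adjacent endpoints at every internal triple, and repairing the path when a chord flips a collider --- but this case analysis \emph{is} the substantive content of the theorem in this direction; asserting that it ``always produces a strictly shorter $d$-connecting path'' is precisely what must be verified, triangle orientation by triangle orientation, and it occupies most of the original Verma--Pearl argument. Worse, one step of your outline is false as stated: you claim that the directed path witnessing an open collider (from $v_r$ down to a vertex of $C$) ``transfers to $\GG_2$.'' Directed paths and descendant sets are \emph{not} preserved between DAGs with the same skeleton and v-structures --- a covered edge $a\to b$ may be reversed, so $\de_{\GG_1}(v_r)$ and $\de_{\GG_2}(v_r)$ can differ --- so you must instead either show that a carefully chosen (shortest, chordless) descending path has every orientation forced by the shared v-structures, or reroute the $d$-connecting path through it; neither is routine bookkeeping. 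A cleaner route to the converse, if you want to avoid this path surgery entirely, is Chickering's transformational characterization: two DAGs with the same skeleton and v-structures are linked by a sequence of covered edge reversals, and a single covered edge reversal visibly preserves the factorization \eqref{eq: dag factorization}, hence the set of Markov distributions.
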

Let $\GG=([p], E)$ be a DAG.
It is well-known that a joint distribution over $(X_1,\ldots,X_p)$ is Markov to $\GG$ if and only if its probability density function $P$ factorizes as
\begin{equation}
\label{eq: dag factorization}
P(X_1,\dots, X_p) = \prod_{i\in [p]}P(X_i|X_{\pa_\GG(i)}).
\end{equation}
To obtain a unique graphical representation of each MEC, Andersson, Madigan, and Perlman proposed and gave a complete characterization of  \emph{essential graphs}  \cite{AMP97}. 
Studen\'y proposed a more geometric interpretation of Markov equivalence via vectors that encode the CI statements, called the \emph{standard imset} \cite{S04, S05}. 
Following this idea, in \cite{SHL10} Studen\'y, Hemmecke, and Lindner introduced the \emph{characteristic imset}, $c_\GG$, of a DAG $\GG$ that encodes the factorization of \Cref{eq: dag factorization}.
As the factorization determines the MEC, this gives us a unique representation of each MEC. 
Formally it is a function $c_\GG \colon \left\{ S\subseteq [p]\colon |S|\geq 2\right\}\to \{0,1\}$ defined as
\[
c_\GG(S) \coloneqq
\begin{cases}
1	&	\text{ if there exists $i\in S$ such that for all $j\in S\setminus\{i\}$, $j\in\pa_\GG(i)$},	\\
0	&	\text{ otherwise}.	\\
\end{cases}
\]
As $c_\GG$ is a function from a finite set we can identify it with a vector in $\mathbb{R}^{2^p-p-1}$ where the basis vectors, $e_S$, are indexed by the sets in $\left\{ S\subseteq [p]\colon |S|\geq 2\right\}$.
Similar to essential graphs, characteristic imsets then give us a unique representation for each MEC.

\begin{theorem}
\label{lem: studeny}
\cite{SHL10}
Two DAGs $\GG$ and $\HH$ are Markov equivalent if and only if $c_\GG = c_\HH$.
\end{theorem}

The next lemma follows from the definition of characteristic imsets and provides a way to recover the structure of the graph from this vector encoding. 
\begin{lemma}
\label{lem: imset structure}
\cite{SHL10}
Let $\GG$ be a DAG on $[p]$.
Then for any distinct nodes $i$, $j$, and $k$ we have
\begin{enumerate}[label=(\arabic*)]
\item{$i\rightarrow j$ or $i\leftarrow j$ in $\GG$ if and only if $c_\GG(\{i, j\})=1$.}
\item{$i\rightarrow j \leftarrow k$ is a v-structure in $\GG$ if and only of $c_\GG(\{i,j,k\})=1$ and $c_\GG(\{i,k\})=0$.}
\end{enumerate}
\end{lemma}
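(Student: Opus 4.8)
The plan is a direct unwinding of the definition of the characteristic imset, proving part (1) first and then invoking it inside the proof of part (2).

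For part (1): since $S=\{i,j\}$ has only two elements, the defining condition for $c_\GG(\{i,j\})=1$ --- namely that some $\ell\in S$ has every element of $S\setminus\{\ell\}$ as a parent in $\GG$ --- reduces to "$j\in\pa_\GG(i)$ or $i\in\pa_\GG(j)$". This is precisely the statement that $\GG$ contains an edge between $i$ and $j$, oriented one way or the other. Both implications are immediate from this observation.

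For the forward direction of part (2): if $i\to j\leftarrow k$ is a v-structure, then by definition it is an \emph{induced} subgraph, so $i$ and $k$ are non-adjacent in $\GG$, and part (1) gives $c_\GG(\{i,k\})=0$. Since $i,k\in\pa_\GG(j)$, choosing $\ell=j$ in the definition of $c_\GG$ on $S=\{i,j,k\}$ witnesses $c_\GG(\{i,j,k\})=1$. For the reverse direction: $c_\GG(\{i,j,k\})=1$ produces some $\ell\in\{i,j,k\}$ whose two companions in $S$ are both parents of $\ell$; if $\ell$ were $i$ or $k$ this would force an edge between $i$ and $k$, contradicting $c_\GG(\{i,k\})=0$ via part (1). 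Hence $\ell=j$, so $i\to j$ and $k\to j$ in $\GG$, and since $i,k$ are non-adjacent the set $\{i,j,k\}$ induces exactly the v-structure $i\to j\leftarrow k$.

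There is essentially no obstacle here --- the argument is a definition-chase and uses neither acyclicity of $\GG$ nor \cref{thm: verma pearl}. The one point worth care is the "induced" qualifier in the definition of a v-structure: it is exactly what guarantees the non-adjacency of the two source nodes, and hence exactly what matches the extra condition $c_\GG(\{i,k\})=0$ in the statement; dropping it would break both directions of part (2).
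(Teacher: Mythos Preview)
Your proof is correct and matches exactly what the paper indicates: the lemma is cited from \cite{SHL10} without proof, the paper merely remarking that it ``follows from the definition of characteristic imsets.'' Your argument is precisely that definition-chase, and there is nothing to add.
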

As we can see, the characteristic imset encodes the skeleton and the v-structures in the 2- and 3-sets. 
Any {(additive) decomposable} and {score equivalent} function can be seen as an affine linear function over the vectors $c_\GG$ \cite{SHL10}. 
An important example of such a function is the \emph{Bayesian Information Criterion} (BIC).
Given $n$ independent samples, $\mathbf{D}$, drawn from the joint distribution of $(X_1, \dots, X_p)$, the BIC is defined as
\begin{equation}
\label{eq: bic}
\BIC(\GG,\mathbf{D}) = \log {P}\left(\mathbf{D}|\hat{\theta}, \GG^h\right)-\frac{d}2\log(n).
\end{equation}
Here $\hat{\theta}$ is the maximum-likelihood estimate for the network parameters,  $d$ denotes the number of free parameters of $\GG$, and $\GG^h$ denotes the hypothesis that $\mathbf{D}$ are i.i.d samples from a distribution that entails exactly the CI statements encoded by $\GG$ \cite{C02}.
Thus, the question of learning the BIC-optimal MEC can be stated as finding the maximum of an affine linear function over a finite set of vectors in a finite-dimensional real vector space.
This motivates the definition of the \emph{characteristic imset polytope} (CIM polytope) for DAGs on $p$ nodes:
\begin{equation*}
\label{eq: cim p}
\CIM_p \coloneqq\conv\left(c_\GG \in \RR^{2^p-p-1} \colon \text{ $\GG = ([p],E)$ a DAG}\right).
\end{equation*}
As $\CIM_p$ is defined as a $0/1$-polytope (that is, the convex hull of vectors with entries that are either $0$ or $1$) the vertices of $\CIM_p$ are precisely $\{c_\GG\colon \text{ $\GG = ([p],E)$ a DAG}\}$ \cite{Zie95}.

A classic constraint-based causal discovery algorithm is the PC algorithm \cite{SG91, T19}. 
It first utilizes CI tests to learn a skeleton $G$ and then to orient v-structures. 
The Max-Min Hill Climbing (MMHC) algorithm \cite{TBA06} utilizes CI tests to learn possible edges in the skeleton, and then uses a score-based method to construct a DAG restricted to using only these edges.
Following the idea of utilizing CI tests to obtain a skeleton, we consider another polytope, closely related to $\CIM_p$.
Let $G = ([p],E)$ be an undirected graph and define the \emph{CIM polytope for $G$} to be 
\begin{equation*}
\label{eq: cim G}
\CIM_G \coloneqq\conv\left(c_\GG \in \RR^{2^p-p-1} \colon\text{ $\GG = ([p],E)$ a DAG with skeleton $G$}\right).
\end{equation*}
Thus, like the PC algorithm, we can learn an undirected skeleton, $G$, via CI tests, and then take a score-based approach via an edge-walk on $\CIM_G$ optimizing the BIC.
Such a method is called a \emph{hybrid algorithm} as it first uses a {constraint-based} approach to restrict the search space, and then uses a {score-based} approach to find the optimal DAG.
%
%
An immediate question is then: what is the relationship between $\CIM_p$ and $\CIM_G$?
To this end we have the following proposition:
\begin{proposition}\label{prop: graph interval face of cimp}
Let $H=([p],E)$ and $H'=([p],E')$ be two undirected graphs such that $E\subseteq E'$.
Then 
\[
\conv\left(c_\GG\in\mathbb{R}^{2^p-p-1}\colon \GG \text{ a DAG with skeleton $G=([p], D)$ where $E\subseteq D\subseteq E'$}\right)
\]
is a face of $\CIM_p$. 
\end{proposition}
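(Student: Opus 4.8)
The plan is to exhibit an explicit linear functional that is maximized over $\CIM_p$ exactly on the characteristic imsets whose skeleton $G=([p],D)$ satisfies $E \subseteq D \subseteq E'$. Recall from \cref{lem: imset structure}(1) that for a DAG $\GG$ with skeleton $G = ([p],D)$ and any pair $\{i,j\}$, we have $c_\GG(\{i,j\}) = 1$ if and only if $\{i,j\} \in D$. So the $2$-set coordinates of $c_\GG$ are precisely the indicator vector of the skeleton. Two conditions must be encoded: every edge of $E$ is present ($c_\GG(\{i,j\}) = 1$ for $\{i,j\} \in E$), and no edge outside $E'$ is present ($c_\GG(\{i,j\}) = 0$ for $\{i,j\} \notin E'$).

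Concretely, I would set
\[
\ell \coloneqq \sum_{\{i,j\} \in E} e_{\{i,j\}} \;-\; \sum_{\{i,j\} \notin E',\, |\{i,j\}| = 2} e_{\{i,j\}},
\]
viewed as a linear functional via the standard inner product on $\RR^{2^p - p - 1}$. For any DAG $\GG$ with skeleton $D$, we get $\langle \ell, c_\GG\rangle = |E \cap D| - |D \cap \binom{[p]}{2}\setminus E'| \le |E|$, since $|E\cap D|\le |E|$ and the subtracted term is nonnegative. Equality $\langle \ell, c_\GG\rangle = |E|$ holds if and only if $E \subseteq D$ (forcing $|E\cap D| = |E|$) and $D \cap (\binom{[p]}{2}\setminus E') = \emptyset$, i.e. $D \subseteq E'$. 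Thus the face of $\CIM_p$ on which $\ell$ attains its maximum value $|E|$ is exactly the convex hull of the characteristic imsets $c_\GG$ with $E \subseteq D \subseteq E'$. Since the functional is bounded above on the polytope and $E \subseteq E'$ guarantees the value $|E|$ is actually achieved (e.g. by any orientation of $E'$ itself, or of any graph between $E$ and $E'$), this maximal set is a nonempty face.

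Finally I would note that because $\CIM_p$ is a $0/1$-polytope, its vertices are exactly $\{c_\GG : \GG \text{ a DAG on } [p]\}$ (stated in the excerpt, citing \cite{Zie95}), and a face of a polytope is the convex hull of the vertices it contains; hence the displayed convex hull in the statement is genuinely the face $\{x \in \CIM_p : \langle \ell, x\rangle = |E|\}$, not merely contained in it. I do not anticipate a serious obstacle here: the only point requiring a line of care is the verification that the subtracted sum contributes nonnegatively and the bookkeeping that equality pins down both containments simultaneously; everything else is the standard fact that the argmax of a linear functional over a polytope is a face.
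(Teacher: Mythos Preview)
Your proof is correct and essentially identical to the paper's: both define the same linear functional (weight $1$ on $2$-sets in $E$, weight $-1$ on $2$-sets outside $E'$, weight $0$ elsewhere) and verify that it is maximized precisely on imsets whose skeleton lies between $E$ and $E'$. Your write-up even includes the extra remark that the face is nonempty and that vertices of a $0/1$-polytope coincide with the defining points, which the paper leaves implicit.
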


\begin{proof}
 It is enough to find a cost function, $w_{H, H'}$, which maximizes precisely over the set
\[
\left\{c_\GG\in\mathbb{R}^{2^p-p-1}\colon \GG \text{ a DAG with skeleton $G=([p], D)$ where $E\subseteq D\subseteq E'$}\right\},
\]
out of all characteristic imsets.
So define
\[
w_{H, H'}(S)\coloneqq
\begin{cases}
0 & \text{if $|S|\neq 2$ or $S\in E'\setminus E$,}\\
1 & \text{if $S\in E$,}\\
-1 & \text{otherwise.}
\end{cases}
\]
Notice that $w_{H, H'}$ is only non-zero for sets with cardinality $2$.
Then if we have $\GG$ with skeleton $G=([p], D)$ we get via \Cref{lem: imset structure}
\begin{align*}
w^T_{H,H'}c_\GG&=\sum_{S\subseteq [p], |S|\geq2}w_{H,H'}(S)c_\GG(S)\\
&=|D\cap E|-|D\setminus E'|.
\end{align*}
The right-hand-side is maximized exactly when $E\subseteq D\subseteq E'$.
Thus $w_{H, H'}$ maximizes exactly over the given set.
\end{proof}
Taking $H=H'=G$, we get the following corollary.
\begin{corollary}\label{prop: graph face of cimp}
Let $G=([p],E)$ be an undirected graph. 
Then $\CIM_G$ is a face of $\CIM_p$. 
\end{corollary}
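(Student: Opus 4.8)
The plan is to read off the statement as the degenerate case $H = H' = G$ of \cref{prop: graph interval face of cimp}. Applying that proposition with $E = E'$, the hypothesis ``$E \subseteq D \subseteq E'$'' collapses to $D = E$, so the face produced is precisely
\[
\conv\left(c_\GG \in \RR^{2^p - p - 1} \colon \GG \text{ a DAG with skeleton } G\right),
\]
which is exactly $\CIM_G$ by definition. Thus $\CIM_G$ is a face of $\CIM_p$.

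If one wishes to exhibit the supporting cost function directly rather than quote the proposition, I would take $w_G$ defined by $w_G(S) = 1$ if $S \in E$, $w_G(S) = -1$ if $|S| = 2$ and $S \notin E$, and $w_G(S) = 0$ if $|S| \neq 2$. Running the same computation as in the proof of \cref{prop: graph interval face of cimp}, for a DAG $\GG$ with skeleton $([p],D)$ one gets $w_G^T c_\GG = |D \cap E| - |D \setminus E|$; since $|D\cap E|\le |E|$ with equality iff $E\subseteq D$, and $|D\setminus E|\ge 0$ with equality iff $D\subseteq E$, this quantity is strictly maximized over all DAGs on $[p]$ exactly when $D = E$. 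One should also remark that $G$ has at least one acyclic orientation (order the vertices and orient every edge accordingly), so the face $\CIM_G$ is nonempty.

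There is essentially no obstacle here: the only thing to observe is that the ``interval'' of skeletons in \cref{prop: graph interval face of cimp} shrinks to the single graph $G$ when the two bounding graphs coincide, and that this single-graph case is precisely the definition of $\CIM_G$.
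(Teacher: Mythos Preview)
Your proof is correct and takes essentially the same approach as the paper: the paper's proof is the single line ``Choose $H=H'=G$ in \cref{prop: graph interval face of cimp},'' which is exactly your first paragraph. The additional material (the explicit cost function and the remark on nonemptiness) is fine but not needed.
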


\section{Edges of the $\CIM$ Polytope}
\label{sec: edges}
To construct efficient algorithms for finding the maximum of a linear score function over a polytope, we need some description of the polytope. 
Assume we are given an arbitrary polytope $Q$ and a linear function $s$. 
It is immediate that the set maximizing $s^Tq$ for $q\in Q$ is a face of $Q$. 
Thus, any linear function assumes its maximum value over $Q$ at at least one vertex of $Q$. 
An \emph{edge-walk} on $Q$ to maximize a linear function $s$ is done in the following way: 
Start at any vertex $q_0$ of $q$, and set $i\coloneqq 0$.  
At each step, choose $q_{i+1}$ such that  $\conv(q_i, q_{i+1})$ is an edge of the polytope and $s^Tq_i < s^Tq_{i+1}$.
If no such edges exist, return $q_i$. 

Assuming that we know every edge of $Q$, such an edge-walk will always return a vertex maximizing $s$. 
Making additional assumptions on the score function or looking for edges in a certain order can sometimes give us similar guarantees. 
For example, see \cite{SUW20}.
As a direct computation of all edges of $\CIM_p$ and $\CIM_G$ is not feasible for large $p$, we will instead identify edges of these polytopes in terms of relations between the characteristic imsets they connect. 
As the characteristic imsets are the vertices of these polytopes we will see how these relations label edges of the polytope.

We will define two relations, one on $\CIM_G$ and one on $\CIM_p$. 
Utilizing the first one we propose a hybrid algorithm that first learns an undirected skeleton via conditional independence tests and then performs an edge-walk along $\CIM_G$, greedily optimizing $\BIC$. 
Then using both we also define a purely score-based algorithm that performs an edge-walk on $\CIM_p$, again greedily optimizing the $\BIC$. 

The edges we identify will include, as a special case, the moves of Greedy Equivalence Search (GES) \cite{C02}. 
This positively answers a question raised by Steffen Lauritzen at the \emph{Workshop on Graphical Models: Conditional Independence and Algebraic Structures, TU Munich, 2019}: Do the moves of GES have a geometric interpretation in terms of the $\CIM_p$ polytope?
More generally, we recover a geometric interpretation of the GIES algorithm \cite{HB12} in the case of purely observational data, as well as the hybrid MMHC algorithm.

To this end, we will begin by defining relations between imsets. 
These relations are motivated by our graphical understanding of Markov equivalence, but also turn out to generalize our intuition. 
%

\begin{definition}
[Turn pair]
\label{def: turn pair}
Let $\GG$ and $\HH$ be two DAGs on node set $[p]$ and with skeleton $G$.  
Suppose there exist $i$,  $j$, $S_i\subseteq[p]\backslash \{i,j\}$ and $S_j\subseteq[p]\backslash \{i,j\}$ such that 
\begin{enumerate}[label=(\arabic*)]
\item{$c_\GG(\{i,j\})=1$;}
\item{$c_\GG(S\cup \{i\})=1$ for all $S\subseteq S_i$ with $|S|\geq 1$;}
\item{$c_\GG(S\cup \{j\})=1$ for all $S\subseteq S_j$ with $|S|\geq 1$;}
\item{either $S_i\not\subseteq\ne_G(j)$ or $S_j\not\subseteq\ne_G(i)$.}
\end{enumerate}
Then we say that $\{\GG, \HH\}$ is a \emph{turn pair} with respect to $(i,j,S_i, S_j)$ if 
\[
c_{\HH} = c_\GG +\sum_{S\in \SSS^+}e_S - \sum_{S\in \SSS^-}e_S
\]
where  $\SSS^+\coloneqq \{T\cup \{i,j\}\colon T\subseteq S_i,\  T \not\subseteq\ne_G(j)\}$ and $\SSS^-\coloneqq \{T\cup \{i,j\}\colon T\subseteq S_j,\ T \not\subseteq\ne_G(i)\}$.
\end{definition}
Note that one of $S_i$ and $S_j$ can may be empty, but not both by (4). 
The name ``turn pair'' is explained via the next proposition.
We observe that $\{\GG, \HH\}$ is a turn pair with respect to $(i,j,S_i,S_j)$ if and only if $\{\HH, \GG\}$ is a turn pair with respect to $(j,i,S_j,S_i)$. 
Moreover, as the edges of a polytope lack orientation, a greedy edge-walk may walk in either direction along a given edge. 
Thus we view our relations between characteristic imsets and their corresponding DAGs as unordered pairs, as opposed to ordered pairs.

If $\GG$ is a directed graph with $i\to j\in\GG$ we denote by $\GG_{i\leftarrow j}$ the directed graph identical to $\GG$ except that the edge $i\to j$ is replaced with $i\leftarrow j$. 

\begin{proposition}
\label{prop: edge turn}
Let $\GG$ be a DAG with $i\to j\in\GG$.
If $\GG_{i\leftarrow j}$ is a DAG, then either $\GG$ and $\GG_{i\leftarrow j}$ are Markov equivalent, or $\{\GG, \GG_{i\leftarrow j}\}$ is a turn pair.
\end{proposition}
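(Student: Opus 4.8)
The plan is to compare the characteristic imsets $c_\GG$ and $c_{\GG_{i\leftarrow j}}$ coordinate by coordinate, using the graphical description of $c_\GG(S)$ from \cref{lem: imset structure} together with \cref{def: budding} and \cref{def: flip}. Since reversing the single arrow $i\to j$ does not change the skeleton, both DAGs have the same skeleton $G$, and $c_\GG(\{i,j\}) = c_{\GG_{i\leftarrow j}}(\{i,j\}) = 1$. So only $3$-and-higher-cardinality coordinates can change, and by \cref{lem: lindner} it suffices to understand which $3$-sets change; but to match the statement of a budding/flip I will in fact track all sets $S$ containing $\{i,j\}$, since those are the only ones that can change (a set $S$ not containing both $i$ and $j$ induces the same subgraph in $\GG$ and $\GG_{i\leftarrow j}$, hence contributes $c_\GG(S) = c_{\GG_{i\leftarrow j}}(S)$; this is the first step).

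Next I would identify the two candidate index sets. Let $S_i \coloneqq \{ s \in \ne_G(i)\setminus\{j\} : s\to i \in \GG \text{ or the relevant completeness condition holds}\}$ — more precisely, I want $S_i$ to be the largest set such that $c_\GG(S\cup\{i\}) = 1$ for all nonempty $S\subseteq S_i$, and similarly $S_j$ the largest set with $c_\GG(S\cup\{j\}) = 1$ for all nonempty $S \subseteq S_j$; concretely these are governed by which neighbors of $i$ (resp. $j$) point into $i$ (resp. $j$) in $\GG$ together with edges among them. The key computation is then: for $T\subseteq S_i$, the set $T\cup\{i,j\}$ has $c_{\GG_{i\leftarrow j}}$-value $1$ precisely when, after reversing, $i\leftarrow j$ together with the arrows from $T$ into... — here one checks that $c_{\GG_{i\leftarrow j}}(T\cup\{i,j\}) = 1$ iff $T\subseteq\ne_G(j)$ or this set is "completed at $i$" in $\GG_{i\leftarrow j}$, which after the reversal means exactly $T\not\subseteq \ne_G(j)$ contributes a $+1$; symmetrically the sets $T\cup\{i,j\}$ with $T\subseteq S_j$, $T\not\subseteq\ne_G(i)$ contribute a $-1$. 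This is where I expect the bookkeeping to be delicate: one must verify that no set $S\ni i,j$ outside $\SSS^+\cup\SSS^-$ changes value, and that the acyclicity hypothesis on $\GG_{i\leftarrow j}$ is what rules out pathological configurations (e.g. an $s$ with $s\to i$ and $j\to s$ simultaneously, which would create a cycle once $i\leftarrow j$ is present).

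Finally I would case-split on $(S_i, S_j)$. If $S_i$ and $S_j$ are both contained in $\ne_G(j)$ and $\ne_G(i)$ respectively (so $\SSS^+ = \SSS^- = \emptyset$), then $c_\GG = c_{\GG_{i\leftarrow j}}$ and the two DAGs are Markov equivalent by \cref{lem: studeny}. If exactly one of $\SSS^+, \SSS^-$ is empty, the remark following \cref{def: turn pair} identifies the pair as a budding (or a v-structure addition when the surviving set has the minimal form $|S_i| = 1$, $S_j = \emptyset$). If both are nonempty, we get a flip with respect to $(i,j,S_i,S_j)$, after checking conditions (1)--(5) of \cref{def: flip}: (1) is immediate, (2)--(3) hold by maximality of $S_i, S_j$, and (4)--(5) are exactly the statements $S_i\not\subseteq\ne_G(j)$, $S_j\not\subseteq\ne_G(i)$ forced by $\SSS^+,\SSS^-\neq\emptyset$. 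The main obstacle, as noted, is the middle step: pinning down precisely which coordinates flip and confirming that acyclicity of both $\GG$ and $\GG_{i\leftarrow j}$ makes the $+1$/$-1$ pattern come out exactly as in \cref{def: flip} rather than producing some spurious change on a set of size $\geq 4$ not of the form $T\cup\{i,j\}$.
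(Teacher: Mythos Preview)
Your outline follows the same route as the paper: compute $c_{\GG_{i\leftarrow j}} - c_\GG$ coordinatewise, observe that only sets containing $\{i,j\}$ can change, and then case-split. The case-split at the end is also organised the same way.

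There is, however, a concrete gap in your choice of $S_i$ and $S_j$. Defining $S_i$ as ``the largest set such that $c_\GG(S\cup\{i\})=1$ for all nonempty $S\subseteq S_i$'' is problematic: such a maximal set need not be unique, and even when it is, it can strictly contain $\pa_\GG(i)$ (e.g.\ throw in a child $c$ of $i$ with $\pa_\GG(i)\subseteq\pa_\GG(c)$), in which case the resulting $\SSS^+$ will include sets whose imset value does \emph{not} change when you reverse $i\to j$. The paper avoids this entirely by taking $S_i=\pa_\GG(i)$ and $S_j=\pa_\GG(j)\setminus\{i\}$ from the outset. With this choice the changing coordinates are exactly
\[
\mathcal A^+=\{T\cup\{i,j\}: T\subseteq\pa_\GG(i)\},\qquad
\mathcal A^-=\{T\cup\{i,j\}: T\subseteq\pa_\GG(j)\setminus\{i\}\},
\]
because the only node whose parent set changes when reversing $i\to j$ is $i$ (gaining $j$) or $j$ (losing $i$), so the ``witness'' for any flipped coordinate must be one of these two.

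Once $S_i,S_j$ are fixed this way, the ``delicate bookkeeping'' you flagged collapses to a single observation: for $T\subseteq\pa_\GG(i)$ one has $T\not\subseteq\pa_\GG(j)$ iff $T\not\subseteq\ne_G(j)$, because $\pa_\GG(i)\cap\ne_G(j)\subseteq\pa_\GG(j)$ (if $s\to i$, $i\to j$, and $s-j\in G$, then $j\to s$ would give the cycle $i\to j\to s\to i$ in $\GG$). The symmetric inclusion $(\pa_\GG(j)\setminus\{i\})\cap\ne_G(i)\subseteq\pa_\GG(i)$ uses acyclicity of $\GG_{i\leftarrow j}$, so your instinct that this hypothesis enters here is right, though your specific example ($s\to i$ and $j\to s$) is already ruled out by $\GG$ being a DAG. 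Finally, your worry about ``spurious change on a set of size $\ge 4$ not of the form $T\cup\{i,j\}$'' is vacuous: every set containing $\{i,j\}$ is of that form.
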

The case in which $\GG$ and $\GG_{i\leftarrow j}$ are Markov equivalent is characterized in \cite{C95}. 
Hauser and B\"uhlmann define a collection of turning moves in terms of the essential graph (see \cite[Propositions~$31$ and~$34$]{HB12}). 
They characterize when $\GG_{i\leftarrow j}$ is a DAG and the relation between the essential graphs of $\GG$ and $\GG_{i\leftarrow j}$ when this is the case.
The above proposition shows that in the case of no interventions, their turning moves are turn pairs. 
The converse is not true, as shown in \Cref{ex: bud not flip of edge}. 

\begin{example}
\label{ex: bud not flip of edge}
By \Cref{prop: edge turn}, turn pairs capture whenever we turn an edge in a DAG, transforming it into another (non-Markov equivalent) DAG, in terms of characteristic imsets.
The converse of \Cref{prop: edge turn} is, on the other hand, not true. 
That is, there exists a turn pair $\{\GG, \HH\}$ for which there is no DAG $\DD$ Markov equivalent to $\GG$ such that $\DD_{i\leftarrow j}$ is Markov equivalent to $\HH$. 
As an example of this, take $\GG$ and $\HH$ as in \Cref{fig: ex bud not flip of edge}.
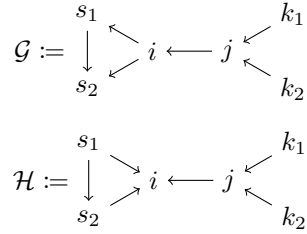
\begin{figure}
\[
\begin{tikzpicture}
\node at (-1.5,0){$\GG\coloneqq$};
\node (s1) at (150:1) {$s_1$};
\node (s2) at (-150:1) {$s_2$};
\node (i) at (0,0) {$i$};
\node (j) at (1,0) {$j$};
\node (k1) at ($(1,0)+(30:1)$) {$k_1$};
\node (k2) at ($(1,0)+(-30:1)$) {$k_2$};

\foreach \from/\to in {s1/s2, i/s1, i/s2, j/i, k1/j, k2/j}{
	\draw[->] (\from) -- (\to);
}

\end{tikzpicture}
\]
\[
\begin{tikzpicture}
\node at (-1.5,0){$\HH\coloneqq$};
\node (s1) at (150:1) {$s_1$};
\node (s2) at (-150:1) {$s_2$};
\node (i) at (0,0) {$i$};
\node (j) at (1,0) {$j$};
\node (k1) at ($(1,0)+(30:1)$) {$k_1$};
\node (k2) at ($(1,0)+(-30:1)$) {$k_2$};

\foreach \from/\to in {s1/s2, s1/i, s2/i, j/i, k1/j, k2/j}{
	\draw[->] (\from) -- (\to);
}

\end{tikzpicture}
\]
\caption{An example of a turn pair not arising from changing the direction of any one edge in any DAG in the MEC.}
\label{fig: ex bud not flip of edge}
\end{figure}
It can be checked that $\{\GG,\HH\}$ is a turn pair with respect to $(i,j,\{s_1,s_2\}, \emptyset)$, but it follows from \Cref{thm: verma pearl} that $i\leftarrow j\in\DD$ for all DAGs $\DD$ Markov equivalent to $\GG$ or $\HH$.
\end{example}

By \Cref{prop: edge turn}, turn pairs arise naturally from an intuitive graphical interpretation of reversing an edge and, as \Cref{ex: bud not flip of edge} shows, strictly generalize this intuition.
\begin{theorem}
\label{thm: turn pair}
If $\{\GG, \HH\}$ is a turn pair, then $\conv(c_\GG, c_\HH)$ is an edge of $\CIM_G$ where $G$ is the skeleton of $\GG$ and $\HH$.
\end{theorem}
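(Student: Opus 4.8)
The plan is to certify $\conv(c_\GG, c_\HH)$ as a face of $\CIM_G$ by producing a linear functional $w$ on $\RR^{2^p - p - 1}$ whose maximum over $\CIM_G$ is attained at exactly the two vertices $c_\GG$ and $c_\HH$. Since $\CIM_G$ is a $0/1$-polytope whose vertices are precisely the imsets $c_{\mathcal K}$ of DAGs $\mathcal K$ with skeleton $G$, and since $c_\GG \neq c_\HH$ in every case of \cref{def: turn pair} (for a v-structure addition $c_\HH = c_\GG + e_{S^\ast}$, and for a budding or flip the set $\SSS^+$ is nonempty because $S^\ast$, resp.\ $S_i$, is not contained in $\ne_G(j)$), such a $w$ shows that $\conv(c_\GG, c_\HH)$ is a $1$-dimensional face, i.e.\ an edge. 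As every point of $\CIM_G$ has the same coordinates on the $2$-subsets of $[p]$ (the indicator vector of the edges of $G$), we may take $w$ to vanish on those coordinates.

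Let $D \coloneqq \{S : c_\GG(S) \neq c_\HH(S)\}$. It follows from \cref{def: turn pair} that every member of $D$ contains the pair $\{i,j\}$ and has at least three elements, and that $c_\GG$ and $c_\HH$ agree on all sets outside $D$. I would then set $w(S) \coloneqq 2c_\GG(S) - 1$ for every $S$ with $|S| \geq 3$ and $S \notin D$, and $w(S) \coloneqq 0$ otherwise. A direct computation, using that every $c_{\mathcal K}$ is a $0/1$-vector, shows that for any DAG $\mathcal K$ with skeleton $G$ the value $w^T c_{\mathcal K}$ equals the number of sets $S \notin D$ with $|S| \geq 3$ on which $c_{\mathcal K}$ and $c_\GG$ are both equal to $1$, minus the number of such sets on which $c_{\mathcal K}$ equals $1$ while $c_\GG$ equals $0$. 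Hence $w^T c_{\mathcal K} \leq w^T c_\GG = w^T c_\HH$, with equality precisely when $c_{\mathcal K}$ agrees with $c_\GG$ on every set outside $D$. So the face of $\CIM_G$ maximizing $w$ is the convex hull of the imsets of those DAGs with skeleton $G$ whose imset agrees with $c_\GG$ off $D$, and the theorem reduces to showing that the only such imsets are $c_\GG$ and $c_\HH$.

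This last point is the combinatorial core of the theorem. For a v-structure addition $D$ is a single set and the reduction is immediate, so the work is in the budding and flip cases, which I would treat uniformly through the ``turn pair with respect to $(i,j,S_i,S_j)$'' formulation of the remark following \cref{prop: edge turn} (buddings being $S_j = \emptyset$). By \cref{lem: lindner}, a DAG $\mathcal K$ with skeleton $G$ whose imset agrees with $c_\GG$ off $D$ is determined by the values $c_{\mathcal K}(\{i,j,s\})$ on the three-element members of $D$; each such set induces in $G$ a path whose middle vertex is $i$ or $j$ and whose endpoints are non-adjacent, so by \cref{lem: imset structure} and \cref{thm: verma pearl} the value is $1$ exactly when $\mathcal K$ contains the v-structure with collider at that middle vertex. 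I would then argue that the orientation $\mathcal K$ assigns to the edge $\{i,j\}$ --- of which there are only two --- together with the nested completeness conditions $c_\GG(S \cup \{i\}) = 1$ for $S \subseteq S_i$ and $c_\GG(S \cup \{j\}) = 1$ for $S \subseteq S_j$ (which $\mathcal K$ inherits, since these sets omit $j$, resp.\ $i$, and so lie outside $D$) and the acyclicity of $\mathcal K$, force all of the remaining relevant v-structures and pin $c_{\mathcal K}$ down to either $c_\GG$ or $c_\HH$. The step I expect to be the main obstacle is ruling out the ``mixed'' patterns in which some three-element members of $D$ carry the v-structure and others do not; this is exactly where the conditions $S_i \not\subseteq \ne_G(j)$, $S_j \not\subseteq \ne_G(i)$ and the completeness values $c_\GG(\{s, s', i\}) = 1$ enter, since two non-adjacent elements of $S_i$ cannot be oriented inconsistently with respect to $i$. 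Once this classification is in hand, $w$ is maximized over $\CIM_G$ exactly at $\{c_\GG, c_\HH\}$ and the proof is complete; carrying out the full case analysis is the bulk of the argument.
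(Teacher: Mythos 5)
There is a genuine gap, and it is located exactly where you predicted the ``main obstacle'' would be --- but the obstacle is not merely hard, it is insurmountable for the functional you chose. Your $w$ vanishes on all of $D$, so the face of $\CIM_G$ it exposes is the convex hull of \emph{all} imsets of DAGs with skeleton $G$ that agree with $c_\GG$ off $D$, and your reduction requires that the only such imsets are $c_\GG$ and $c_\HH$. This is false for flips and for buddings. For the flip of \cref{ex: flip} (so $\GG$ has $s_1\to i\leftarrow s_2$, $i\to j\leftarrow s_3$, with $S_i=\{s_1,s_2\}$, $S_j=\{s_3\}$, and $D=\SSS^+\cup\SSS^-=\{\{i,j,s_1\},\{i,j,s_2\},\{i,j,s_1,s_2\},\{i,j,s_3\}\}$), take $\DD$ to be $\GG$ with the edge $s_3\to j$ reversed to $j\to s_3$. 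Then $c_\DD=c_\GG-e_{\{i,j,s_3\}}$, which agrees with $c_\GG$ on every set outside $D$ but is neither $c_\GG$ nor $c_\HH$; so $w^Tc_\DD=w^Tc_\GG$ and your face has at least three vertices. A budding counterexample: let $G$ have edges $i-j$, $i-s_1$, $i-s_2$, $s_1-s_2$, let $\GG$ be $s_1\to i$, $s_2\to i$, $s_1\to s_2$, $i\to j$, and take the budding with respect to $(i,j,\{s_1,s_2\})$, so $\SSS^+=\{\{i,j,s_1\},\{i,j,s_2\},\{i,j,s_1,s_2\}\}$. The DAG $\DD$ with $s_1\to i$, $j\to i$, $i\to s_2$, $s_1\to s_2$ has $c_\DD=c_\GG+e_{\{i,j,s_1\}}$ (the set $\{s_1,s_2,i\}$ stays at value $1$ because it is a complete triangle), so again $c_\DD$ agrees with $c_\GG$ off $D$ without being $c_\GG$ or $c_\HH$. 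Your acyclicity/completeness argument cannot rule these out because they \emph{are} DAGs with the right skeleton satisfying all the constraints off $D$.

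The moral is that the weights on $D$ cannot all be zero: the ``mixed'' (and partial) patterns on $D$ must be penalized by the functional itself, not excluded combinatorially. This is precisely what the paper's proofs (\cref{prop: addition edges}, \cref{prop: buddings edges}, \cref{prop: flip edges}) do: they assign the top set $S^\ast\cup\{i,j\}$ (resp.\ $S_i\cup\{i,j\}$ and $S_j\cup\{i,j\}$) a positive weight and the remaining sets of $\SSS^\pm$ small negative weights of the form $-1/(m-1)$ that sum against it, so that a DAG picking up only some of the sets in $\SSS^+$ (or dropping only the set $\{i,j,s_3\}\in\SSS^-$, as in the flip counterexample above) scores strictly less than $c_\GG$. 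The residual combinatorial work in the paper is then only to show that a DAG achieving the value $1$ on the top set is forced to realize the whole pattern of $c_\HH$, which is a much weaker (and true) claim than the one you need. Your choice of weights off $D$ (essentially $\pm1$ according to $c_\GG$) is fine and matches the paper's in spirit; the proof cannot be completed, however, without redesigning the weights on $D$.
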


The above theorem tells us that moving via turn pairs is in fact an edge-walk along $\CIM_G$.
For algorithms based on such edge-walks to be able to perform well we would like to move around $\CIM_G$ relatively freely. 
In the following proposition we show that the edges labeled by turn pairs are enough to traverse the polytope $\CIM_G$. 

\begin{proposition}
\label{prop: traversing}
Let $G$ be a graph and $\GG$ and $\HH$ two DAGs with skeleton $G$. Then there exists a sequence of edges 
$
\conv(c_\GG,c_{\DD_1}), \conv(c_{\DD_1},c_{\DD_2}), \ldots, \conv(c_{\DD_{m-1}},c_{\DD_m}), \conv(c_{\DD_m},c_{\HH}),
$ 
of $\CIM_G$ such that each pair $\{\GG,\DD_1\}, \{\DD_1,\DD_2\}, \ldots, \{\DD_{m-1},\DD_m\}, \text{ and } \{\DD_m,\HH\}$ is a turn pair.  
\end{proposition}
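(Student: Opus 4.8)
The plan is to reduce the statement to a purely combinatorial fact: any two acyclic orientations of a fixed undirected graph $G$ can be joined by a sequence of single-edge reversals such that every intermediate orientation is again acyclic. Granting this, \cref{prop: edge turn} turns each reversal into either a pair of Markov equivalent DAGs (the same vertex of $\CIM_G$, by \cref{lem: studeny}) or a turn pair, and \cref{thm: turn pair} then certifies that the surviving steps are genuine edges of $\CIM_G$. If $c_\GG = c_\HH$ there is nothing to prove, so assume $c_\GG \neq c_\HH$.

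For the combinatorial fact I would induct on $p = |V(G)|$, the case $p \le 1$ being trivial. Let $v$ be a source of $\HH$. First I would make $v$ a source in $\GG$ using only reversals of edges incident to $v$: as long as $v$ is not a source in the current DAG $\GG'$, choose an in-neighbour $u$ of $v$ that is maximal among the in-neighbours of $v$ with respect to reachability in $\GG'$ (such a $u$ exists because the in-neighbours of $v$ form a finite nonempty poset under reachability in a DAG). The crucial point is that $u \to v$ can then be reversed without creating a cycle: a directed path $u \to \cdots \to v$ of length at least two would terminate in an edge $w \to v$ with $w \neq u$ an in-neighbour of $v$ reachable from $u$, contradicting maximality of $u$. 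Each such reversal strictly lowers the in-degree of $v$ and introduces no new in-edge at $v$, so after finitely many steps $v$ becomes a source; write $\GG''$ for the result. Now $\GG''$ and $\HH$ both have $v$ as a source, while $\GG'' - v$ and $\HH - v$ are acyclic orientations of $G - v$; by the inductive hypothesis they are joined by acyclicity-preserving single-edge reversals in $G - v$. None of those reversals touches $v$, and since no directed cycle passes through a source, lifting them back to $G$ keeps every intermediate orientation acyclic. The final orientation has $v$ as a source and agrees with $\HH$ away from $v$, hence equals $\HH$, completing the induction.

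It remains to assemble the pieces. The construction yields DAGs $\GG = \GG^{(0)}, \GG^{(1)}, \dots, \GG^{(N)} = \HH$, all with skeleton $G$, each obtained from the previous by one acyclicity-preserving edge reversal; by \cref{prop: edge turn}, every pair $\{\GG^{(t)}, \GG^{(t+1)}\}$ is a pair of Markov equivalent DAGs or a turn pair. Passing to characteristic imsets gives a walk $c_{\GG^{(0)}}, \dots, c_{\GG^{(N)}}$ whose consecutive entries either coincide (the Markov equivalent case) or are joined by a turn pair. Deleting repeated consecutive entries produces a subsequence $c_\GG = c_{\GG_{i_0}}, c_{\GG_{i_1}}, \dots, c_{\GG_{i_r}} = c_\HH$ with distinct consecutive entries; since whether a pair of DAGs is a turn pair depends only on their two characteristic imsets (as noted after \cref{def: turn pair}), each $\{\GG_{i_s}, \GG_{i_{s+1}}\}$ is still a turn pair, so \cref{thm: turn pair} makes $\conv(c_{\GG_{i_s}}, c_{\GG_{i_{s+1}}})$ an edge of $\CIM_G$. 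Relabelling $\GG_1 \coloneqq \GG_{i_1}, \dots, \GG_m \coloneqq \GG_{i_{r-1}}$ gives exactly the asserted sequence. The only nontrivial ingredient is the combinatorial lemma, and inside it the one delicate point is choosing, at each stage, an edge at $v$ whose reversal preserves acyclicity; the maximal-in-neighbour rule is precisely what makes this work and lets the induction close.
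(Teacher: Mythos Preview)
Your proof is correct and follows essentially the same strategy as the paper: both reduce to the combinatorial fact that any two acyclic orientations of $G$ are connected by acyclicity-preserving single-edge reversals, then invoke \cref{prop: edge turn} (and implicitly \cref{thm: turn pair}) to conclude. The paper organizes the combinatorial step as an iteration over a fixed linear extension of $\HH$, at each vertex reversing its out-edges to later vertices by choosing an extremal child, whereas you induct on $|V(G)|$ by first making a source of $\HH$ into a source of $\GG$ via the dual extremality argument on parents; the key mechanism is identical, and you are somewhat more explicit than the paper about collapsing the Markov-equivalent steps.
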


\begin{proof}
By \cref{prop: edge turn} it is enough to show that there exists a sequence of DAGs $\GG=\DD_0, \dots, \DD_n=\HH$ such that $\DD_i$ and $\DD_{i+1}$ differ by the direction of a single edge. 
To find such a sequence it is enough to show that for any two DAGs, $\GG$ and $\HH$, that share the same skeleton, there exists an edge $i- j\in G$ such that $i\to j\in\GG$, $i\gets j\in\HH$, and $\GG_{i\gets j}$ is a DAG. 

We can partially order all edges via $i'\to j'\preceq i\to j$ if and only if $j'\in\de_\GG(j)$ or, if $j'=j$, $i\in\de(i')$. 
Note that we sort the children according to $\GG$ and the parents in reverse.
Consider all edges that differ between $\GG$ and $\HH$ and consider such an edge $i\to j$
that is maximal in the prescribed order.
For the sake of contradiction assume there is a cycle in $\GG_{i\gets j}$.
Then there is a directed path $i\to \dots \to j$ different from the edge $i\to j$. 
However, every edge in this path is bigger in the order $\preceq$, and hence this path is present in $\HH$ as well. 
This gives us a directed cycle in $\HH$, a contradiction.
Hence, with this choice of the edge $i\to j$, $\GG_{i\gets j}$ will be a DAG and the result follows. 
\end{proof}

\begin{algorithm}[t!]
\caption{Skeletal Greedy CIM}
\label{alg: ske greedy cim}
\textbf{Input:}{ Data $\mathbf{D}$.}\\
\textbf{Output:}{ A characteristic imset $c_\GG$.}
\begin{algorithmic}
\State{Perform CI tests to find the underlying skeleton $G$\footnotemark}
\State{Let $\GG$ a DAG with skeleton $G$}
\State{$c_\DD\gets \texttt{null}$}
\While{$c_\DD \neq c_\GG$}
\State{$c_\GG\gets c_\DD$}
\State{$c_\DD\gets$ turn phase (\Cref{alg: turn phase} in \Cref{sec: algorithms} with $c_\DD$ as input.)}
\EndWhile
\Return{$c_\GG$}
\end{algorithmic}
\end{algorithm}
\footnotetext{For example we can use the \texttt{skeleton} algorithm from the \texttt{pcalg} package in R \cite{HB12, KMMCMB12}.}
The edges labeled by turn pairs thus connect the polytope $\CIM_G$ in the sense that for any two DAGs, $\GG$ and $\HH$, with skeleton $G$, there exists a sequence of turn pairs that begins at $c_\GG$ and ends at $c_\HH$. 
Thus we can take a simplex-type approach to finding the BIC-optimal MEC.
To this end, we propose a hybrid greedy causal discovery algorithm in which we first learn the skeleton $G$ via CI tests, similar to the PC algorithm, and then perform a restricted edge-walk on $\CIM_G$ utilizing the edges labeled by turn pairs, which we call the \emph{turn phase}.
We call this algorithm \emph{skeletal greedy CIM} (see \Cref{alg: ske greedy cim}).

Up until now we have primarily studied $\CIM_G$, but we would like to move between vertices of $\CIM_G$ and $\CIM_H$ when $G$ and $H$ are not equal.
A direct consequence of \Cref{prop: edge turn} and \Cref{thm: turn pair} is that the turning phase of GIES \cite{HB12} is a type of edge-walk over $\CIM_G$. 
The question then arises whether it holds for the forward and backward phases as well.
Thus, we would like a definition similar to \Cref{def: turn pair} but for adding an edge.
\begin{definition}
[Edge pair]
\label{def: edge pair}
Let $\GG$ and $\HH$ be two DAGs on node set $[p]$.  
Suppose there exists distinct nodes $i$, $j$ and a set $S^\ast\subseteq[p]\backslash \{i,j\}$ such that 
\begin{enumerate}[label=(\arabic*)]
\item{$c_\GG(\{i,j\})=0$,}
\item{$c_\GG(S\cup \{i\})=1$ for all $S\subseteq S^\ast$ with $|S|\geq 1$.}
\end{enumerate}
Then we say that $\{\GG, \HH\}$ is an \emph{edge pair} with respect to $(i,j,S^\ast)$ if
\[
c_{\HH} = c_\GG +\sum_{S\in \SSS_{+i\leftarrow j}}e_S
\]
where  $\SSS_{+i\leftarrow j}\coloneqq \{S\cup \{i,j\}\colon S\subseteq S^\ast\}$.
\end{definition}

Let $\GG$ be a DAG and assume $i$ and $j$ are not adjacent in the skeleton of $\GG$. 
We denote by $\GG_{+i\leftarrow j}$ the directed graph identical to $\GG$ with the edge $i\leftarrow j\in\GG_{+i\leftarrow j}$.
Then, similar to \Cref{prop: edge turn}, we have the following:
\begin{proposition}
\label{prop: edge addition}
Let $\GG$ be a DAG and assume $i$ and $j$ are not adjacent in the skeleton of $\GG$. 
If $\GG_{+i\leftarrow j}$ is a DAG, then $\{\GG,\GG_{+i\leftarrow j}\}$ is an edge pair.
\end{proposition}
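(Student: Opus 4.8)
The plan is to set $\HH \coloneqq \GG_{+j\to i}$ and show directly that $\{\GG,\HH\}$ is an edge pair with respect to $(i,j,S^\ast)$, where $S^\ast \coloneqq \pa_\GG(i)$. Since $i$ and $j$ are non-adjacent in $\GG$, neither $i$ nor $j$ lies in $\pa_\GG(i)$, so $S^\ast \subseteq [p]\setminus\{i,j\}$ as required by \cref{def: edge pair}. Condition (1) of that definition, $c_\GG(\{i,j\})=0$, is exactly non-adjacency via \cref{lem: imset structure}, and condition (2) holds because for every nonempty $S\subseteq \pa_\GG(i)$ the node $i$ is a sink of the induced subgraph on $S\cup\{i\}$, so $c_\GG(S\cup\{i\})=1$ straight from the definition of the characteristic imset. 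It then remains only to verify the identity $c_\HH = c_\GG + \sum_{S\in\SSS_{+(j,i)}}e_S$, where $\SSS_{+(j,i)} = \{S\cup\{i,j\} \colon S\subseteq \pa_\GG(i)\}$, which I would do coordinate by coordinate.

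For the coordinatewise check I would evaluate both sides on an arbitrary $T$ with $|T|\geq 2$, using that passing from $\GG$ to $\HH$ alters exactly one parent set, namely $\pa_\HH(i) = \pa_\GG(i)\cup\{j\}$, while $\pa_\HH(\ell) = \pa_\GG(\ell)$ for all $\ell\neq i$. If $T$ does not contain both $i$ and $j$, then a node $\ell\in T$ witnesses $c_\HH(T)=1$ if and only if it witnesses $c_\GG(T)=1$ (when $\ell=i$ and $j\notin T$ the extra parent $j$ is irrelevant, and when $i\notin T$ nothing changed), so $c_\GG(T)=c_\HH(T)$; moreover $T\notin\SSS_{+(j,i)}$ since every member of $\SSS_{+(j,i)}$ contains both $i$ and $j$, and both sides agree. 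If $i,j\in T$, write $T=\{i,j\}\cup S$ with $S\subseteq[p]\setminus\{i,j\}$ and split on the value of $c_\GG(T)$. When $c_\GG(T)=1$, the witnessing sink of $T$ in $\GG$ cannot be $i$ (that would need $j\in\pa_\GG(i)$) nor $j$ (that would need $i\in\pa_\GG(j)$), so it is some $\ell\in S$ with $i\to\ell$ in $\GG$; acyclicity of $\GG$ then forbids $\ell\in\pa_\GG(i)$, so $S\not\subseteq\pa_\GG(i)$, i.e. $T\notin\SSS_{+(j,i)}$, while $\ell$ remains a sink of $T$ in $\HH$, so both sides equal $1$. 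When $c_\GG(T)=0$, I claim $c_\HH(T)=1$ exactly when $S\subseteq\pa_\GG(i)$: if $S\subseteq\pa_\GG(i)$ then $T\setminus\{i\}=S\cup\{j\}\subseteq\pa_\HH(i)$, so $i$ is a sink in $\HH$; conversely, a sink at $j$ in $\HH$ would force $i\in\pa_\GG(j)$, impossible by non-adjacency, and a sink $\ell\in S$ in $\HH$ would give $i,j\in\pa_\GG(\ell)$ and hence already $c_\GG(T)=1$, a contradiction, so the only remaining possibility is a sink at $i$, which gives $S\subseteq\pa_\GG(i)$. Since $T\in\SSS_{+(j,i)}$ precisely when $S\subseteq\pa_\GG(i)$, the two sides agree here as well.

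Combining the cases yields $c_\HH = c_\GG + \sum_{S\in\SSS_{+(j,i)}}e_S$, so $\{\GG,\HH\}$ is an edge pair with respect to $(i,j,\pa_\GG(i))$. The proof is largely bookkeeping; the one delicate point — and the analogue of the corresponding step in the proof of \cref{prop: edge turn} — is the subcase $i,j\in T$ with $c_\GG(T)=1$, where acyclicity of $\GG$ is exactly what guarantees the forced sink lies outside $\pa_\GG(i)$, keeping the coordinatewise sum inside $\{0,1\}$ and matching the combinatorial description of $\SSS_{+(j,i)}$.
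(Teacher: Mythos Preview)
Your proof is correct and follows essentially the same approach as the paper's: a coordinatewise comparison showing that $c_\GG$ and $c_\HH$ agree except on $\SSS_{+(j,i)}$, using that only $\pa(i)$ changes and invoking acyclicity of $\GG$ to rule out a sink in $\pa_\GG(i)$ when $c_\GG(T)=1$. The paper organizes the case split slightly differently (first narrowing to $\{i,j\}\subseteq S\subseteq\pa_\GG(i)\cup\{i,j\}$, then treating $\pa_\GG(i)=\emptyset$ separately), whereas you handle all cases uniformly by splitting on the value of $c_\GG(T)$; your version is a bit more explicit. One tiny imprecision: in the subcase $c_\GG(T)=0$ with a putative sink $\ell\in S$ in $\HH$, you write ``would give $i,j\in\pa_\GG(\ell)$ and hence already $c_\GG(T)=1$,'' but what you really need (and clearly intend) is the stronger $T\setminus\{\ell\}\subseteq\pa_\HH(\ell)=\pa_\GG(\ell)$, which makes $\ell$ a sink in $\GG$ as well.
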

Thus edge pairs give an interpretation, in terms of characteristic imsets, of adding an edge to a graph the same way as turn pairs give an interpretation of changing the direction of an edge.
However, in this case we believe that the converse holds. 
\begin{conjecture}
\label{con: edge pair characterization}
Let $\{\GG, \HH\}$ be an edge pair with respect to $(i,j,S^\ast)$. 
Then there exists a DAG $\GG'$ Markov equivalent to $\GG$ such that $\GG'_{+i\leftarrow j}$ is a DAG Markov equivalent to $\HH$. 
\end{conjecture}
Similar to turn pairs, edge pairs constitute edges of $\CIM_p$.
\begin{theorem}
\label{thm: edge pair}
If $\{\GG, \HH\}$ is an edge pair, then $\conv(c_\GG, c_\HH)$ is an edge of $\CIM_p$ where $p$ is the number of nodes in $\GG$ and $\HH$.
\end{theorem}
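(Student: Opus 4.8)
The plan is to exhibit a single linear functional that is maximized over $\CIM_p$ exactly on the segment $\conv(c_\GG,c_\HH)$. Since $c_\HH-c_\GG$ contains the coordinate $e_{\{i,j\}}$ (because $\emptyset\subseteq S^\ast$, so $\{i,j\}\in\SSS_{+(j,i)}$), the two imsets are distinct, and it suffices to find $w\in\RR^{2^p-p-1}$ with $w^Tc_\GG=w^Tc_\HH=:M$ and $w^Tc_\DD<M$ for every DAG $\DD$ on $[p]$ with $c_\DD\notin\{c_\GG,c_\HH\}$. Then $\{x\in\CIM_p\colon w^Tx=M\}$ is a face of $\CIM_p$ whose vertex set is $\{c_\GG,c_\HH\}$, hence a $1$-dimensional face, i.e.\ an edge.

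I would build $w$ in two layers. Write $G$ for the skeleton of $\GG$; by \cref{def: edge pair} and \cref{lem: imset structure} the skeleton of $\HH$ is $G'$ with $E(G')=E(G)\cup\{\{i,j\}\}$. Set
\[
w \;=\; N\, w_{G,G'} \;+\; w_0,
\]
where $w_{G,G'}$ is the functional from \cref{prop: graph interval face of cimp} (applied with $H=G$, $H'=G'$), $N$ is a large positive constant, and $w_0$ is a bounded correction term supported on sets of size $2$ and $3$. As computed in the proof of \cref{prop: graph interval face of cimp}, $w_{G,G'}^Tc_\DD\le|E(G)|$ for every DAG $\DD$, with equality precisely when the skeleton of $\DD$ has edge set $D$ with $E(G)\subseteq D\subseteq E(G')$; since $E(G')\setminus E(G)=\{\{i,j\}\}$, this forces $D\in\{E(G),E(G')\}$. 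Hence for $N$ large relative to the entries of $w_0$, every maximizer of $w$ has skeleton $G$ or $G'$, and $w_{G,G'}$ contributes the same constant $N|E(G)|$ on both skeleton classes, so within each class $w$ ranks the vertices exactly as $w_0$ does. It then remains to choose $w_0$ so that (a) $c_\GG$ is the unique maximizer of $w_0$ among DAGs with skeleton $G$; (b) $c_\HH$ is the unique maximizer among DAGs with skeleton $G'$; and (c) $w_0^Tc_\GG=w_0^Tc_\HH$.

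The guiding observations are that $c_\GG$ and $c_\HH$ agree on every coordinate outside $\SSS_{+(j,i)}=\{S\cup\{i,j\}\colon S\subseteq S^\ast\}$, that $c_\GG(S)=0$ for every $S\in\SSS_{+(j,i)}$ (forced, as both vectors are $0/1$), and that by \cref{lem: lindner} two imsets coincide once they agree on all sets of size $2$ and $3$. For $w_0$ I would take $w_0(S)=\eps$ for every $3$-set $S$ with $c_\GG(S)=1$ and for every set $S=\{i,j,s\}$ with $s\in S^\ast\setminus\ne_G(j)$, take $w_0(S)=-\eps$ for all other $3$-sets $S$, put $w_0(\{i,j\})=-\sum_{s\in S^\ast}w_0(\{i,j,s\})$ (this enforces (c), since $c_\HH-c_\GG$ is supported on $\SSS_{+(j,i)}$ and $w_0$ vanishes on sets of size $\ge 4$), and set $w_0=0$ on all remaining coordinates. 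Because $c_\HH$ matches $c_\GG$ on all $3$-sets not containing $\{i,j\}$, verifying (a), (b), (c) reduces to a finite check over $3$-sets.

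The hard part will be the uniqueness claims in (a) and (b): ruling out a ``hybrid'' DAG $\DD$ with skeleton $G$ or $G'$ that ties $\GG$ or $\HH$ under $w_0$. I expect to handle this by splitting the sets $\{i,j,s\}$ according to whether $s\in\ne_G(j)$. For $s\in S^\ast\cap\ne_G(j)$ the triple $\{i,j,s\}$ is a $3$-clique in $G'$, so $c_\DD(\{i,j,s\})=1$ for every DAG $\DD$ with skeleton $G'$ (the sink of an acyclic orientation of a $3$-clique has the other two nodes as parents), and this coordinate contributes only a constant; on skeleton $G$ the same triple is the path $i-s-j$ and $w_0=-\eps$ there is consistent with $c_\GG(\{i,j,s\})=0$. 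For $s\in S^\ast\setminus\ne_G(j)$ the triple $\{i,j,s\}$ is the path $s-i-j$ in $G'$ (using that $\{i,s\}$ is an edge of $G$, which is condition (2) of \cref{def: edge pair} with $S=\{s\}$), so $c_\DD(\{i,j,s\})=1$ exactly when $\DD$ contains $s\to i\leftarrow j$ — the v-structure $\HH$ acquires — and our positive weight rewards it; on skeleton $G$ this triple has no possible v-structure and contributes $0$. All remaining $3$-sets carry $\pm\eps$ according to $c_\GG$, so in each skeleton class the maximum of $w_0$ is attained only by imsets agreeing with $c_\GG$ (equivalently $c_\HH$) on every relevant $3$-set; \cref{lem: lindner} then forces $c_\DD=c_\GG$ or $c_\DD=c_\HH$. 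The degenerate case $S^\ast=\emptyset$, in which $\{\GG,\HH\}$ is an edge addition in the sense of \cref{def: addition}, can be dispatched first and is considerably simpler.
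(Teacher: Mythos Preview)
Your approach is correct and genuinely different from the paper's. The paper builds a single explicit cost function by case analysis on whether $X:=S^\ast\setminus\ne_G(j)$ is empty; in each case it assigns carefully tuned weights to $\{i,j\}$, to the triples $\{i,j,y\}$ with $y\in S^\ast\cap\ne_G(j)$, to the triples $\{i,j,x\}$ with $x\in X$, and to the larger sets in $\SSS_{+(j,i)}$, and then argues by tracking what $c_\DD(S^\ast\cup\{i,j\})=1$ forces about the v-structures of $\DD$. Your layered construction $w=Nw_{G,G'}+w_0$ is more modular: \cref{prop: graph interval face of cimp} reduces the problem to the two skeleton classes $G$ and $G'=G\cup\{i,j\}$, after which a uniform $\pm\eps$ weight on $3$-sets (plus one balancing weight on $\{i,j\}$) suffices, with \cref{lem: lindner} doing all the uniqueness work. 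This avoids the paper's case split and its bookkeeping on sets of size $\ge4$, and makes transparent that the only deviation from the obvious ``agree with $c_\GG$'' functional is the positive weight on the triples $\{i,j,s\}$ with $s\in S^\ast\setminus\ne_G(j)$, i.e.\ exactly the new v-structures $\HH$ acquires. One detail worth making explicit in a full write-up is the case $s\in(\ne_G(i)\cap\ne_G(j))\setminus S^\ast$: there $\{i,j,s\}$ is a $3$-clique in $G'$, so $c_\HH(\{i,j,s\})=1$, and since $s\notin S^\ast$ this forces $c_\GG(\{i,j,s\})=1$ as well; hence $w_0(\{i,j,s\})=+\eps$ and the coordinate is constant on the $G'$ skeleton class, exactly as you need.
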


\begin{algorithm}[t!]
\caption{Greedy CIM}
\label{alg: greedy cim}
\raggedright
\textbf{Input:}{ Data $\mathbf{D}$.}\\
\textbf{Output:}{ A characteristic imset $c_\GG$.}
\begin{algorithmic}
\State{Let $\GG$ be the DAG without any edges}
\State{$c_\DD\leftarrow \texttt{null}$}
\While{$c_\DD \neq c_\GG$}
\State{$c_\GG\gets c_\DD$}
\State{$c_\DD\gets$ edge phase (\Cref{alg: edge phase} in \Cref{sec: algorithms} with $c_\DD$ as input.)}
\State{$c_\DD\gets$ turn phase (\Cref{alg: turn phase} in \Cref{sec: algorithms} with $c_\DD$ as input.)}
\EndWhile\\
\Return{$c_\GG$}
\end{algorithmic}
\end{algorithm}
By combining \Cref{prop: edge addition} with \Cref{thm: edge pair} we obtain a positive answer to the aforementioned question by Steffen Lauritzen; namely, we see that the moves of GES have a geometric interpretation as edges of $\CIM_p$. 
Going even further, by combining this observation with \Cref{prop: edge turn} and \Cref{thm: turn pair}, we see that the moves of the GIES algorithm, which (in the case of purely observational data) extends GES with an additional turn phase, also admit a geometric interpretation as edges of $\CIM_p$.
Similarly, the MMHC algorithm performs a greedy search akin to that of GES, but it first restricts the search space to a subset of edges that are allowed to appear in the skeleton.
An application of \Cref{prop: graph interval face of cimp} with $H=([p], \emptyset)$ thus extends these results to the MMHC algorithm. 
Since greedy SP \cite{SUW20} is defined as an edge-walk along another family of convex polytopes (called DAG associahedra \cite{MUW18}), these observations imply that the popular greedy score-based and hybrid causal discovery algorithms (GES, GIES, MMHC, and greedy SP) can all be viewed as edge-walks along a convex polytope. 
Thus greedy causal discovery is, in a sense, geometric.
We summarize this observation in the following theorem:
\begin{theorem}
\label{thm: causal discovery is geometric}
The following causal discovery algorithms are greedy edge-walks along a convex polytope:
\begin{enumerate}
\item GES,
\item GIES with purely observational data,
\item MMHC, and
\item Greedy SP.
\end{enumerate}
\end{theorem}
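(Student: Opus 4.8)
The plan is to assemble this statement from the results already established in this section together with the linearization of the $\BIC$ furnished by \cite{SHL10}: maximizing any decomposable, score-equivalent function over Markov equivalence classes on $[p]$ coincides with maximizing an affine linear functional over the vertices of $\CIM_p$, and restricting the candidate DAGs to have a fixed skeleton $G$ replaces $\CIM_p$ by its face $\CIM_G$ (\cref{prop: graph face of cimp}). Via \cref{lem: studeny} I identify a Markov equivalence class with the vertex $c_\GG$ determined by any of its DAG members, so each elementary move of each algorithm becomes a step between two vertices of $\CIM_p$. Two polytope facts will be used repeatedly without comment: a face of a polytope is again a polytope, and an edge of $\CIM_p$ whose two endpoints lie in a face $F$ is an edge of $F$, while conversely every edge of $F$ is an edge of $\CIM_p$.

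For \textbf{GES}, I would argue that a forward step, which passes from one equivalence class to an equivalence class with exactly one more adjacency, is realized on suitable DAG representatives by a single arc addition $\GG \mapsto \GG_{+j\to i}$; this is Chickering's characterization \cite{C02}. By \cref{prop: edge addition} the pair $\{\GG,\GG_{+j\to i}\}$ is an edge pair, so \cref{thm: edge pair} makes $\conv(c_\GG,c_{\GG_{+j\to i}})$ an edge of $\CIM_p$, and a backward step is simply the reverse of such a move. Since GES takes a step only when it strictly increases the (affine linear) score, it is a greedy edge-walk along $\CIM_p$. For \textbf{GIES on purely observational data}, the only new ingredient is the turn phase, in which an arc $i\to j$ of a representative is reversed; by \cref{prop: edge turn} this either leaves the vertex unchanged, when the two DAGs are Markov equivalent, or produces a turn pair, and \cref{thm: turn pair} identifies the latter with an edge of $\CIM_G$, hence of $\CIM_p$. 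Therefore GIES with observational data is also a greedy edge-walk along $\CIM_p$.

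For \textbf{MMHC}, I would first invoke \cref{prop: graph interval face of cimp} with $H=([p],\emptyset)$ and $H'=([p],E')$, where $E'$ is the set of candidate adjacencies produced by the initial CI tests: the convex hull $F$ of the characteristic imsets of all DAGs with skeleton contained in $E'$ is a face of $\CIM_p$, hence a convex polytope, and every vertex MMHC visits is a vertex of $F$. Each hill-climbing move adds, deletes, or reverses one arc, so by the same reasoning as for GES and GIES it either fixes the current vertex or crosses an edge of $\CIM_p$ both of whose endpoints lie in $F$, which is therefore an edge of $F$; since MMHC only moves to strictly improve its (decomposable, score-equivalent) score, it is a greedy edge-walk along $F$. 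Finally, \textbf{greedy SP} is by construction an edge-walk along a DAG associahedron \cite{SUW20,MUW18}, so nothing is left to prove there.

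I expect the only delicate point to be the bookkeeping that translates the customary descriptions of GES and GIES --- phrased in terms of essential graphs --- into the DAG-level arc operations covered by \cref{prop: edge addition} and \cref{prop: edge turn}; this uses Chickering's analysis of the GES operators \cite{C02} and Hauser and B\"uhlmann's analysis of the turn phase \cite{HB12}. One should also check that the two-phase scheme ``examine edge additions first, then edge deletions'' still counts as a greedy edge-walk in the sense of this section --- it is simply one that restricts, at each stage, the set of edges it is willing to traverse --- and likewise for the restricted move set of MMHC. There is no genuinely new geometric content beyond \cref{thm: turn pair} and \cref{thm: edge pair}; the theorem is a synthesis of those results with known structural facts about the four algorithms.
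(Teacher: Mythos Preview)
Your proposal is correct and follows essentially the same route as the paper: the paper's argument is precisely the combination of \cref{prop: edge addition} with \cref{thm: edge pair} for GES, the addition of \cref{prop: edge turn} and \cref{thm: turn pair} for the turn phase of GIES, the application of \cref{prop: graph interval face of cimp} with $H=([p],\emptyset)$ for MMHC, and the appeal to \cite{SUW20,MUW18} for greedy SP. Your write-up is in fact more explicit than the paper's, which presents the theorem as a summary of the preceding discussion rather than giving a self-contained proof; the extra care you take with the face-edge bookkeeping and the essential-graph-to-DAG translation is appropriate and does not depart from the paper's reasoning.
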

\Cref{ex: bud not flip of edge} further shows that the edges of $\CIM_p$ labeled by turn and edge pairs are a strict generalization of the moves of GES and GIES. 
Hence, any edge-walk that greedily optimizes BIC over $\CIM_p$ can be viewed as an extension of these causal discovery algorithms.

In regards to \Cref{thm: edge pair}, we propose the purely score-based algorithm \emph{greedy CIM} (\Cref{alg: greedy cim}) which extends GES and GIES.
This algorithm is, as opposed to skeletal greedy CIM (\Cref{alg: ske greedy cim}), not a hybrid algorithm, as we do not rely on conditional independence tests to find the skeleton. 
Instead it relies on an \emph{edge phase} that consists of a restricted edge-walk, utilizing the edges of $\CIM_p$ determined by edge pairs.
Due to \Cref{thm: edge pair}, the greedy CIM algorithm consists solely of an edge-walk on $\CIM_p$. 
In \Cref{sec: results} we analyze how greedy CIM and skeletal greedy CIM perform on simulated data relative to GES, GIES, MMHC, greedy SP, and the PC algorithm. 

\section{Simulations}
\label{sec: results}
In \Cref{sec: edges} we proposed two algorithms, skeletal greedy CIM (\Cref{alg: ske greedy cim}), and greedy CIM (\Cref{alg: greedy cim}). %
Here we compare the performance of these algorithms on simulated data with the state-of-the-art.

An implementation of all algorithms discussed in this section is available at \cite{github}. 
The simulated data was produced in $R$ \cite{R} using linear structural equation models with Gaussian noise.
The true underlying DAG $\GG^\ast$ was chosen randomly using an Erd\H{o}s-R\'enyi model on $p=8$ vertices and expected neighborhood size $d$, which we varied over the interval $[0.5, 7]$.
Each edge $i\to j$ was given an edge-weight $w_{i,j}$ chosen uniformly from $[-1, -0.25]\cup [0.25, 1]$.
The direction of the edges were given by a linear order of the vertices, sampled uniformly from all linear orders.
We then sampled from a multivariate Gaussian distribution over the random variables $X_1,\dots,X_p$ where $X_i=\varepsilon_i+\sum_{k\in\pa_{\GG^\ast}(i)}w_{k,i}X_k$. 
Here, the $\varepsilon_i$ are independent and normally distributed random variables with mean $0$ and variance $1$.
We produced $100$ models for each $d$ and from each model we drew $n=10,000$ samples.
This was done via the \texttt{MASS} library \cite{MASS}.
As the implementation of greedy CIM and skeletal greedy CIM available at \cite{github} is done in Python, we used the \texttt{rpy2} module for the $R$-to-Python conversions.

To produce the undirected skeleton  in \Cref{alg: ske greedy cim} we used the \texttt{skeleton} algorithm in the \texttt{pcalg} package \cite{HB12, KMMCMB12}.
The algorithm \texttt{skeleton} requires a significance level $\alpha$ for the CI tests, which we varied over $\{0.01, 0.001, 0.0001\}$.
Skeletal greedy CIM, greedy CIM, GES, and GIES all aim to optimize BIC (see \Cref{eq: bic}) which we computed for our models via the \texttt{GaussL0penObsScore}-class from the \texttt{pcalg} package.
In order to fairly compare the different algorithms that are each a single edge-walk along a convex polytope (according to \Cref{thm: causal discovery is geometric}) we ran greedy SP with no restarts and unbounded search depth ($r=1$, $d=\infty$) \cite{SUW20}. 
(Note this choice of parameter settings results in greedy SP performing worse than it did for the same simulations in \cite[Figure 5]{SUW20}, as the parameter settings used to generate \cite[Figure 5]{SUW20} were $r = 10$ and $d = 4$.)  
In \Cref{fig: algo rec} we see the ratio of models recovered from the samples versus the expected neighborhood size $d$.
In \Cref{fig: algo shd} we compare the model recovery rate and the average  Structural Hamming Distance (SHD) (see \cite{TBA06} for a definition) to the true model versus the average expected neighborhood size $d$.

\begin{figure}
\begin{subfigure}{0.45\textwidth}
\includegraphics[width = 5.5cm]{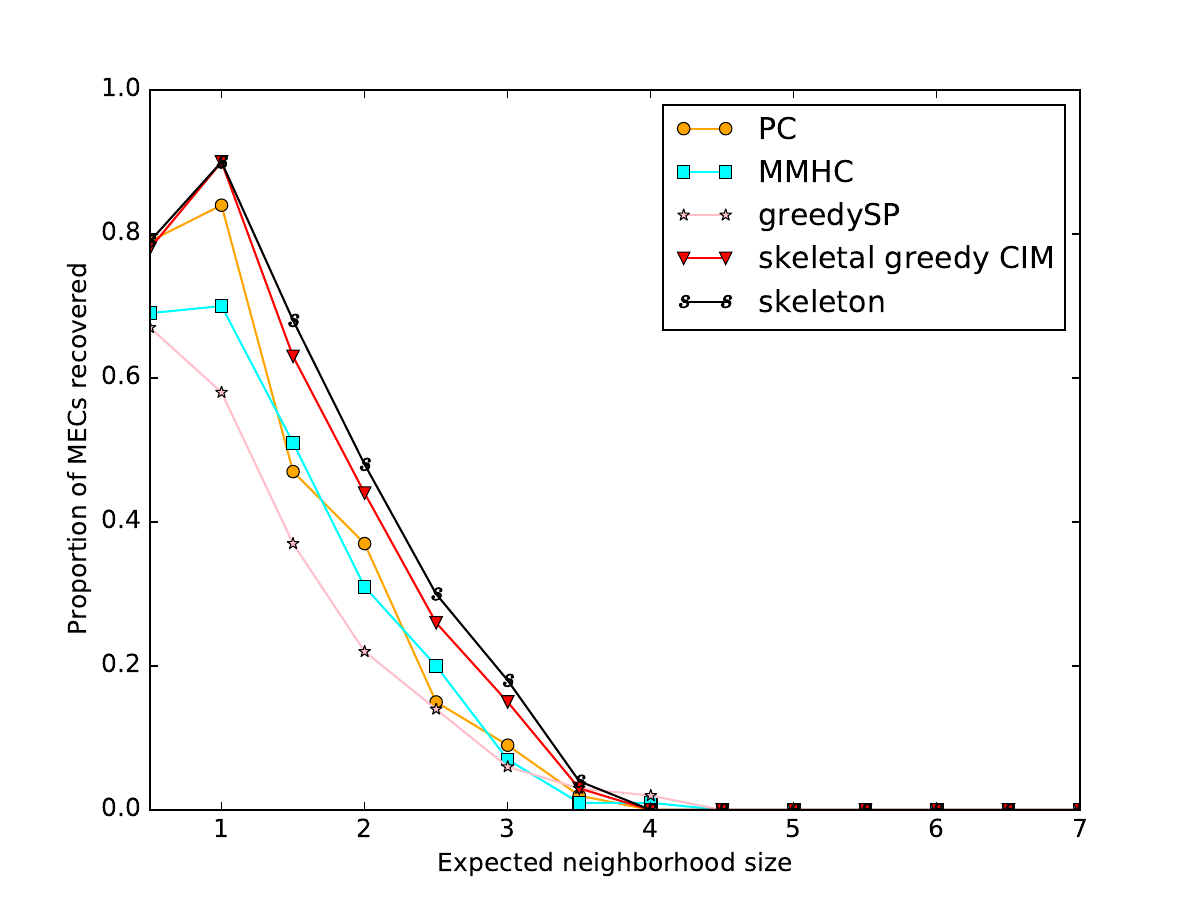}
\caption{$p=8$, $n=10,000$, $\alpha = 0.01$}
\label{subfig: algo rec pc ske mmhc 0.01}
\end{subfigure}
\hfill
\begin{subfigure}{0.45\textwidth}
\includegraphics[width = 5.5cm]{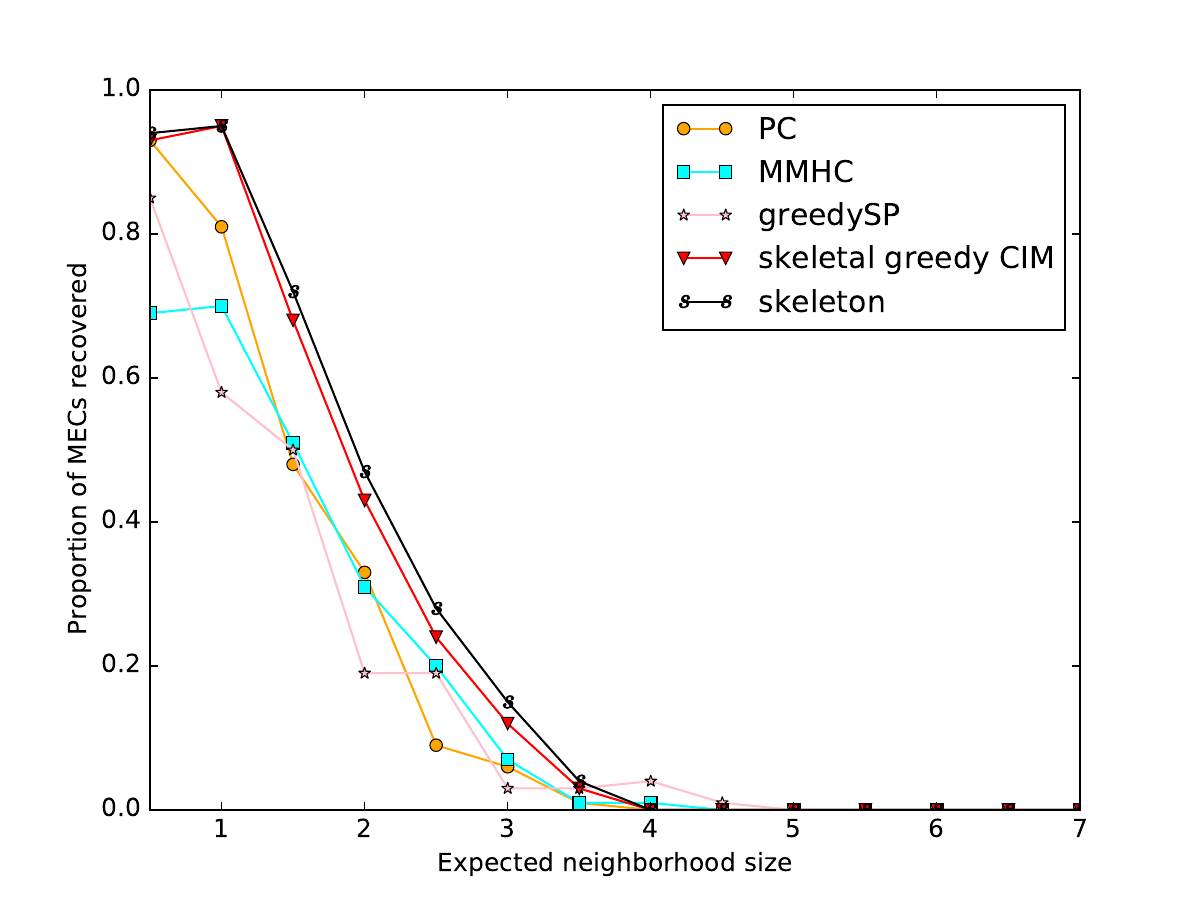}
\caption{$p=8$, $n=10,000$, $\alpha = 0.001$}
\label{subfig: algo rec pc ske mmhc 0.001}
\end{subfigure}
\newline
\begin{subfigure}{0.45\textwidth}
\includegraphics[width = 5.5cm]{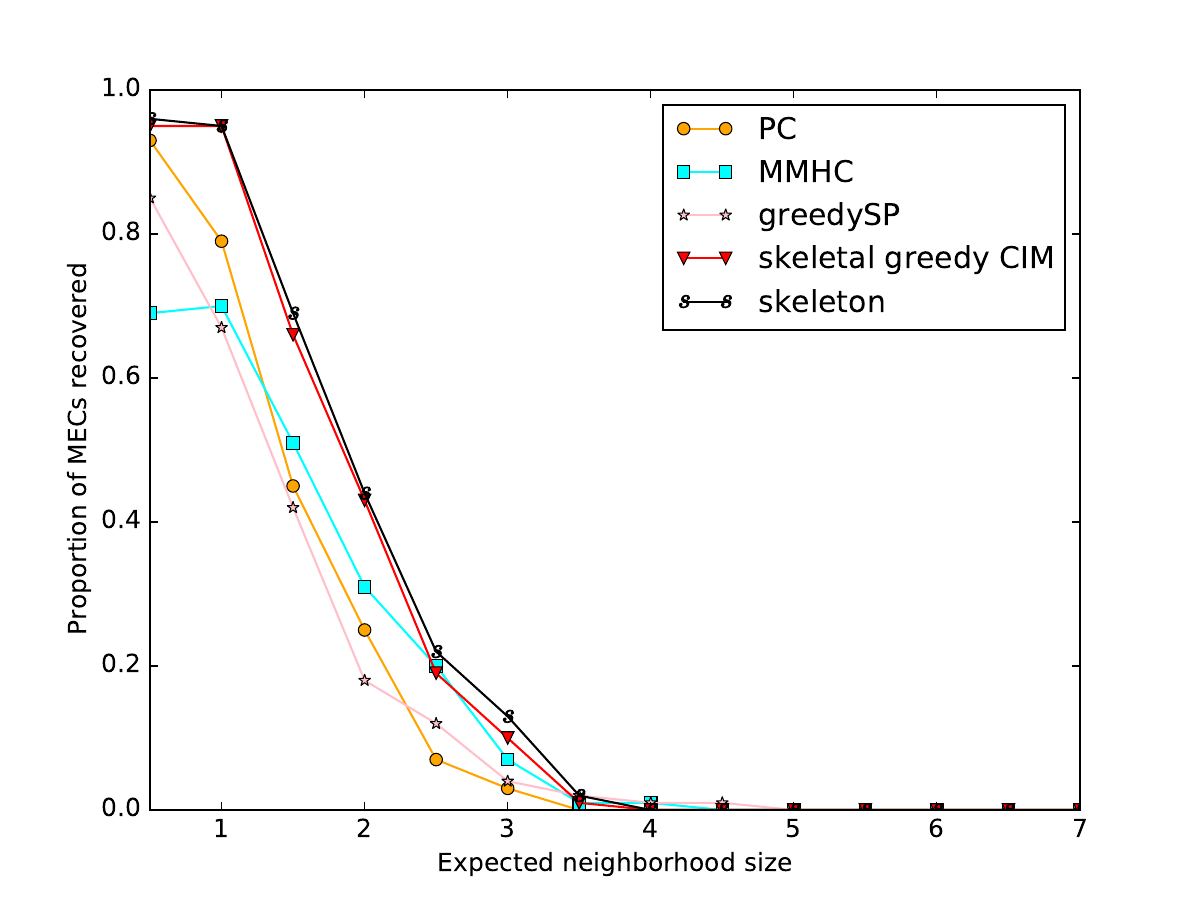}
\caption{$p=8$, $n=10,000$, $\alpha = 0.0001$}
\label{subfig: algo rec pc ske mmhc 0.0001}
\end{subfigure}
\hfill
\begin{subfigure}{0.45\textwidth}
\includegraphics[width = 5.5cm]{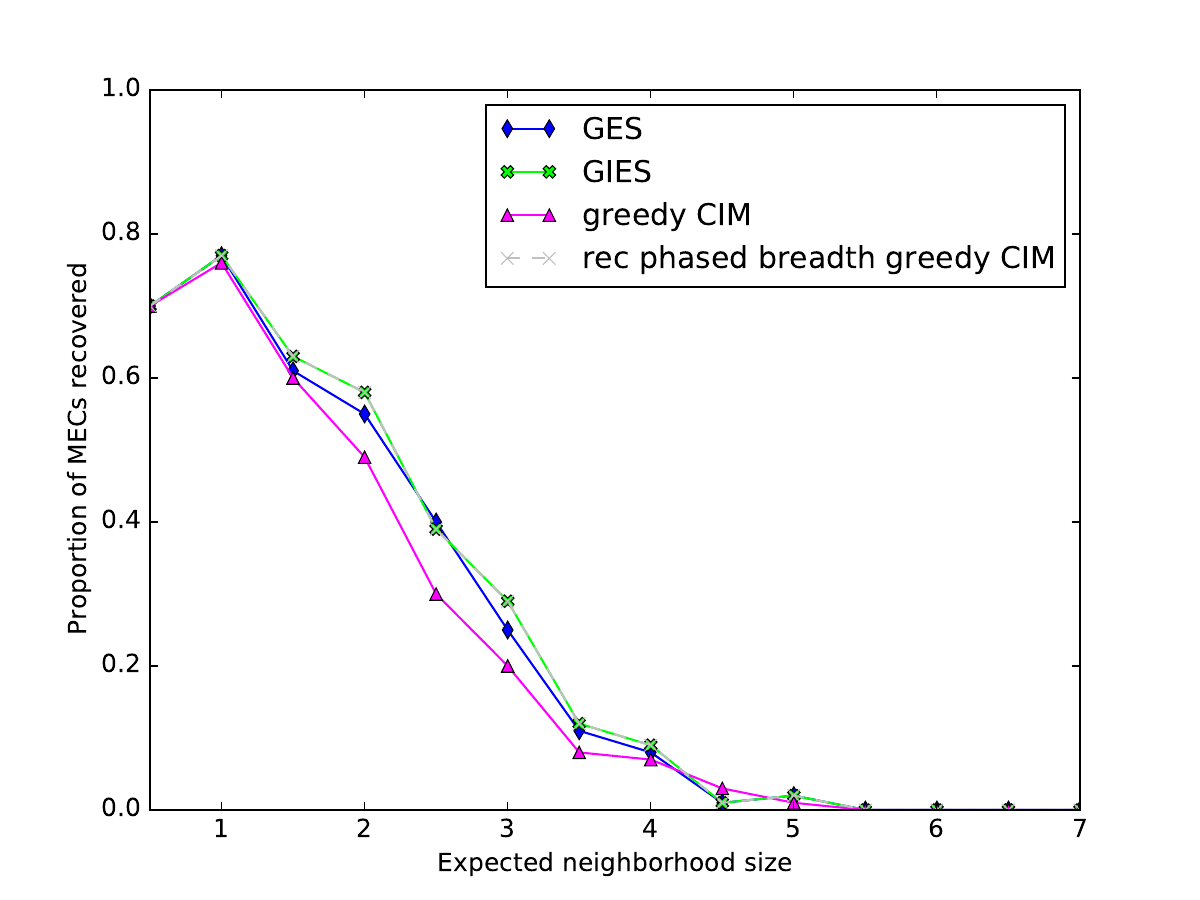}
\caption{$p=8$, $n=10,000$}
\label{subfig: algo rec ges gcim}
\end{subfigure}
\caption{Ratio of models recovered versus the expected neighborhood size of the true graph. In \cref{subfig: algo rec pc ske mmhc 0.01}--\cref{subfig: algo rec pc ske mmhc 0.0001} we ran PC, MMHC, and skeletal greedy CIM on $100$ models. Each model had $p=8$ nodes and the weights of the edges were sampled uniformly in $[-1, -0.25]\cup [0.25, 1]$. We used a sample size of $n=10, 000$, and varied $\alpha$ in $\{0.01, 0.001, 0.0001\}$. In \Cref{subfig: algo rec ges gcim} we see how GES, GIES, and greedy CIM perfomed on the same data. }
\label{fig: algo rec}
\end{figure}

In \Cref{subfig: algo rec pc ske mmhc 0.01}, \Cref{subfig: algo rec pc ske mmhc 0.001}, and \Cref{subfig: algo rec pc ske mmhc 0.0001} we compared all algorithms relying on CI tests (i.e., all constraint-based and hybrid algorithms). 
We see that skeletal greedy CIM has a higher recovery rate than greedy SP, MMHC and the PC algorithm. 
Note that both skeletal greedy CIM and PC are restricted by the performance of the \texttt{skeleton} algorithm, which is the algorithm used to identify the skeleton of the learned DAG.   
Thus, we have also included how often \texttt{skeleton} finds the true skeleton. 
We see that, if the correct skeleton is identified, skeletal greedy CIM almost always learns the true MEC.
However, the same is not true for the PC algorithm.
Based on this near optimality of skeletal greedy CIM, we cannot expect the performance of skeletal greedy CIM to increase by much, even if more edges of $\CIM_G$ are identified and added to the implementation.
The main difference of skeletal greedy CIM  and MMHC is that skeletal greedy CIM relies on CI tests to determine the skeleton, as opposed to MMHC, which only restricts to a set of possible skeletons.
The fact that skeletal greedy CIM outperforms MMHC in \Cref{fig: algo rec} suggests that the set of moves used by skeletal greedy CIM, given by turn pairs, is diverse enough that there is no advantage of hybrid methods that rely on score-based edge specification from a restricted set compared to methods that fully specify a skeleton and then rely on turning edges.
Computational results regarding $\CIM_4$ also suggest that the number of turn pairs make up for a significant part of the edges of $\CIM_G$, but edge pairs make up for a small part of edges of $\CIM_p$ (less than a quarter for $p=4$). 
Thus MMHC might be rather restricted when moving between MECs with different skeletons, which is a non-issue for skeletal greedy CIM.

In \Cref{subfig: algo rec ges gcim} we compared the purely score-based algorithms.
By \Cref{prop: edge turn} and \Cref{prop: edge addition}, greedy CIM can do all moves of GES and GIES, and more.
Recall that greedy CIM was implemented using edge pairs and turn pairs, performing only a depth-first search, whereas GES and GIES perform recurrent phased, breadth-first searches.
To estimate the extent to which turn pairs and edge pairs generalize the moves of GES and GIES, we also implemented a recurrent phased breadth-first version of greedy CIM.
That is, we first only consider edge pairs that increase the number of edges, then the ones that decrease the number of edges, then we enter the turn phase.
We then cycle through these three phases, analogous to GIES.
We call this algorithm \emph{recurrent phased breadth-first greedy CIM}.
As can be seen in \Cref{subfig: algo rec ges gcim}, this version of greedy CIM replicates the output of GIES. 
On the other hand, GES and GIES perform better than greedy CIM. 
This suggests that recurrent phased approaches to optimizing BIC will typically yield better results.
Moreover, the fact that recurrent phased breadth-first greedy CIM matches the best performing algorithm (GIES) suggests that characterizing more edges of $\CIM_p$ and incorporating them into the implementation of greedy CIM could yield even better performing greedy score-based causal discovery algorithms.
The previously mentioned computational results for $\CIM_4$ suggest that there is much room for improvement in this direction as the turn pairs and edge pairs make up less than a quarter of the edges for $\CIM_4$. 

\begin{figure}
\begin{subfigure}[t]{0.45\textwidth}
\includegraphics[width = 5.5cm]{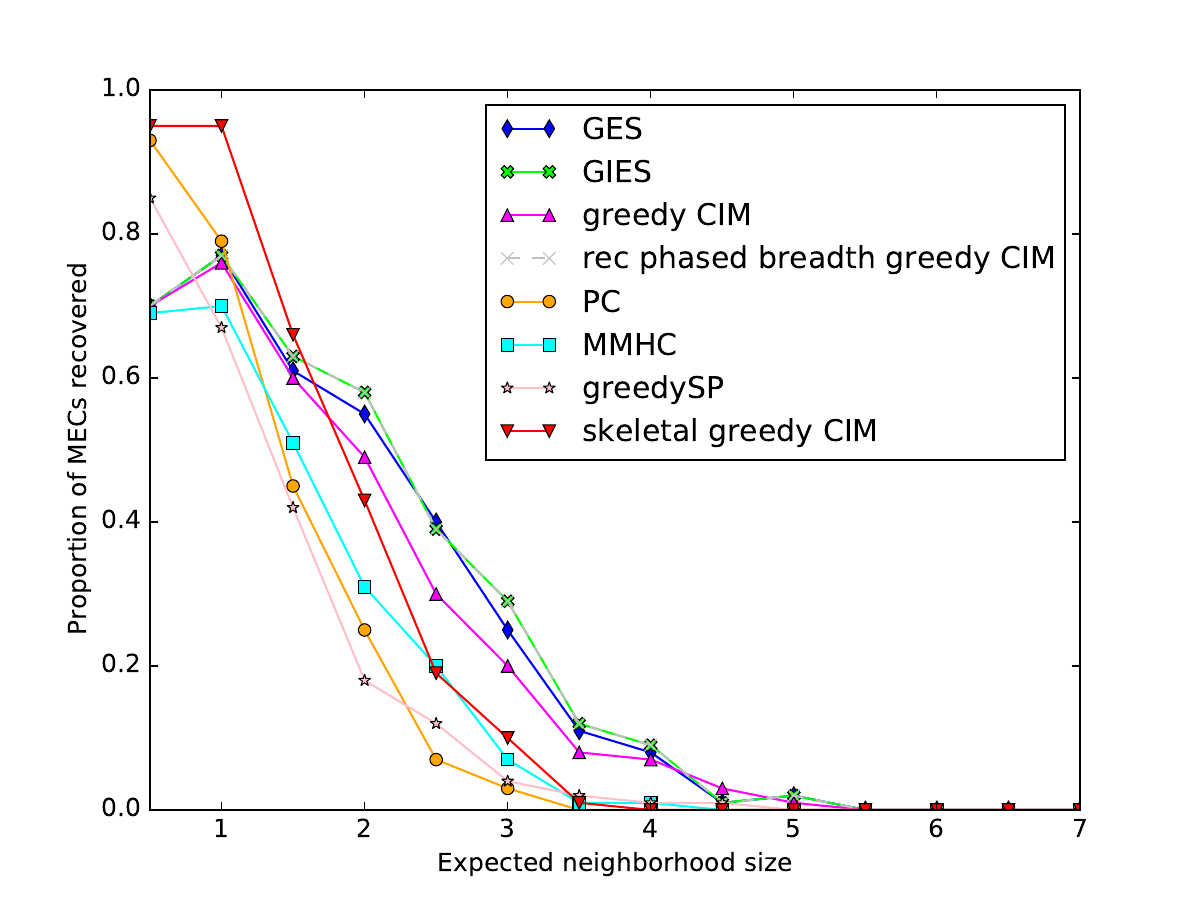}
\caption{The ratio of models recovered.}
\label{subfig: algo rec all 0.0001}
\end{subfigure}
\hfill
\begin{subfigure}[t]{0.45\textwidth}
\includegraphics[width = 5.5cm]{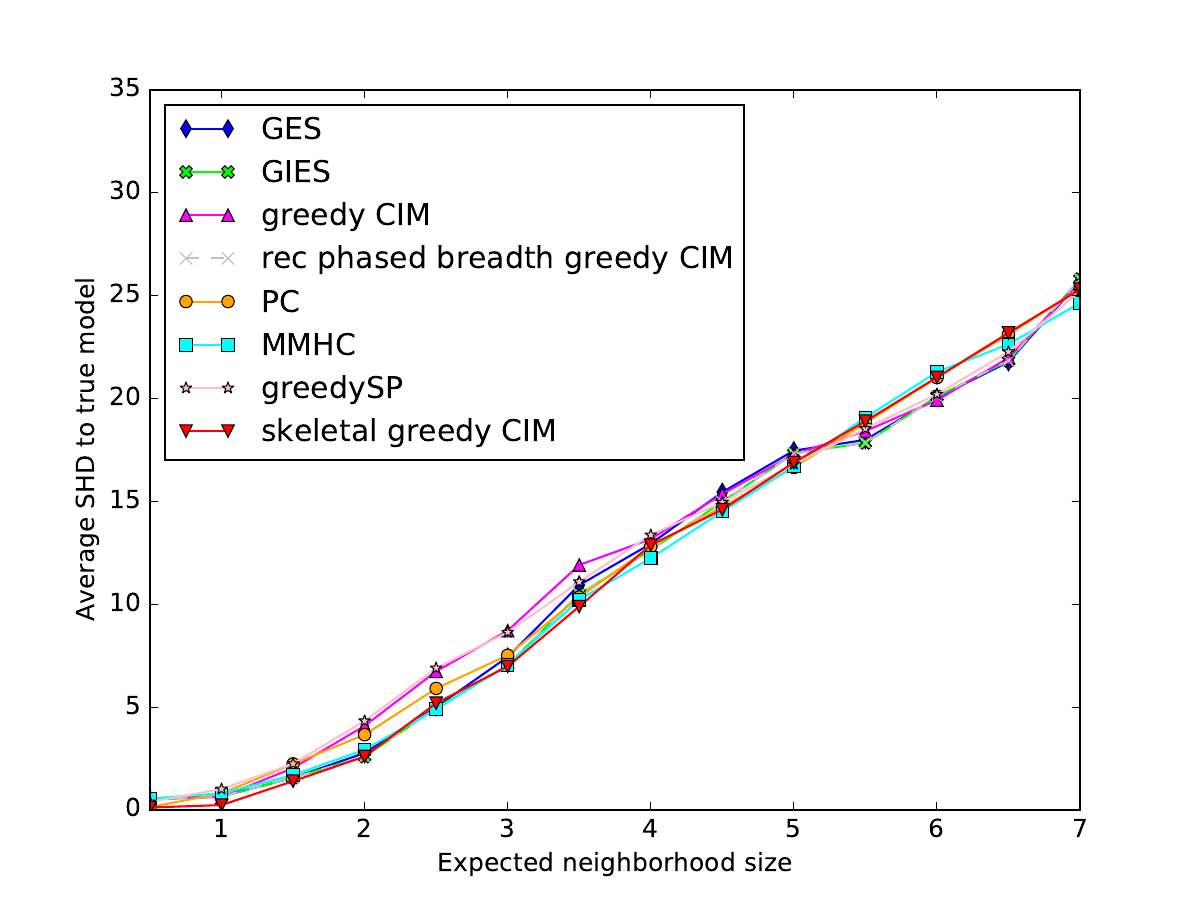}
\caption{The average SHD between the true model and the result of different algorithms.}
\label{subfig: algo shd all}
\end{subfigure}
\caption{A comparison between all algorithms discussed based on $100$ simulations.
Each model had $p=8$ nodes and the weights of the edges were sampled uniformly in $[-1, -0.25]\cup [0.25, 1]$. 
We used a sample size of $n=10,000$, and $\alpha= 0.0001$. }
\label{fig: algo shd}
\end{figure}

In \Cref{fig: algo shd} we give a complete comparison of the recovery ratios of all algorithms discussed as well as the SHD between the result for each algorithm and the true model.
Even though greedy CIM has a higher recovery ratio than the PC algorithm and MMHC, the average SHD is higher as well. 
This indicates that, while greedy CIM typically succeeds in finding the true DAG more often than these algorithms, when it fails to do so it returns a less accurate MEC than the other algorithms.
Thus, greedy CIM likely does a move early on from which it cannot move towards the optimal imset, since we do not have access to all edges of $\CIM_p$. 
(Note that, as BIC is linear over $\CIM_p$, this would never happen given a complete characterization of the edges of $\CIM_p$.)
On the other hand, skeletal greedy CIM is one of the top performers in regards to average SHD. 
As opposed to skeletal greedy CIM, greedy CIM will probably improve if more edges of $\CIM_p$ are identified. 

\section{Discussion}
\label{sec: discussion}

In this paper, we have studied the characteristic imset polytope $\CIM_p$ and its faces $\CIM_G$. 
We have shown that most common moves utilized in greedy causal discovery algorithms, such as reversing or adding an edge, correspond to edges of $\CIM_p$.
Utilizing this, we introduced skeletal greedy CIM (\Cref{alg: ske greedy cim}) and greedy CIM (\Cref{alg: greedy cim}).
These algorithms are greedy depth-first search edge-walks over the  $\CIM_G$ and $\CIM_p$ polytopes, respectively.
Skeletal greedy CIM is a hybrid algorithm that first does CI tests to learn a skeleton $G$, and then passes to a restricted edge-walk over $\CIM_G$, attempting to maximize the BIC by walking along edges labeled by turn pairs or edge pairs.
Greedy CIM performs a similar restricted edge-walk over $\CIM_p$.
Both algorithms could likewise be implemented using any score-equivalent and decomposable score function.
We showed that (recurrent phased breadth-first) greedy CIM is a geometric generalization of GES and GIES in the case of purely observational data. 
Consequently, GES and GIES admit a geometric interpretation as edge-walks along a convex polytope.
It further follows that MMHC has a similar interpretation. 
As greedy SP already has such an interpretation in terms of the DAG associahedron \cite{MUW18} it follows that all greedy algorithms discussed in this paper have a geometric interpretation as an edge-walk along a convex polytope.
In this sense, we have observed that greedy causal discovery is geometric. 

An implementation of skeletal greedy CIM and greedy CIM is available at \cite{github}.
Given data drawn from a joint distribution on 8 variables, these implementations return a graph in approximately $1$ and 10 seconds on average, respectively. 
We believe that a more efficient implementation is possible, but we leave that for future work.

Skeletal greedy CIM was shown to outperform the other hybrid algorithms such as MMHC and greedy SP on simulated Gaussian data. 
The main difference between these algorithms is that skeletal greedy CIM relies on CI tests to determine the skeleton, while MMHC only utilizes the CI tests to restrict the set of possible skeletons.
Thus it is probable that turn pairs capture many edges of $\CIM_G$, while turn and edge pairs capture relatively few edges of $\CIM_p$. 
So while skeletal greedy CIM appears to be a near optimal hybrid algorithm given its constraint-based bounds, identifying more edges of $\CIM_p$ to extend the moves used by MMHC between skeleta could lead to an algorithm capable of outperforming both skeletal greedy CIM and MMHC.  
Given that one can use \texttt{polymake} \cite{AGHJLPR17, GJ00} to compute all edges of $\CIM_4$, a natural first step would be to try to generalize some of these edges not captured by edge pairs or turn pairs to higher values of $p$.

Finally, recall that GIES first adds in edges without considering the deletion of edges, then deletes edges without considering the addition of edges, then reverses edges, and then cycles through each of these phases.
GIES also does a breadth-first search.
Thus, we believe that the depth-first nature of greedy CIM induces a preference on the edges which is avoided by GIES via a breadth-first search.
A recurrent phased breadth-first version of greedy CIM was implemented and performed identically, in terms of accuracy, with GIES in our simulations. 
A natural follow-up question is then: how often, if ever, does recurrent phased breadth-first search greedy CIM utilize the extra moves to which it has access?
Presently, what we can surmise is that finding and implementing more edges of the $\CIM_p$ polytope could lead to even better greedy causal discovery algorithms than the current front-runners (GIES and recurrent phased breadth-first greedy CIM).

\section{Acknowledgements}
All three authors were partially supported by the Wallenberg AI, Autonomous Systems and Software Program (WASP) funded by the Knut and Alice Wallenberg Foundation.
Svante Linusson was partially supported by Grant (No. 2018-05218) from Vetenskapsr\aa{}det (The Swedish Research Council).
Liam Solus was partially supported by Starting Grant (No. 2019-05195) from Vetenskapsr\aa{}det (The Swedish Research Council).
The authors thank an anonymous reviewer for helpful suggestions that greatly improved the presentation of the paper.

\bibliographystyle{siamplain}
\bibliography{references}

\appendix

\section{Proofs of Theorems in \Cref{sec: edges} }
\label{sec: proofs edges}


\subsection*{Proof of \Cref{prop: edge turn}}
We have the following equality
\[
c_{\GG_{i\leftarrow j}}=c_\GG+\sum_{S\in \mathcal{A}^+}e_S-\sum_{S\in \mathcal{A}^-}e_S
\]
for some $\mathcal{A}^+$  and $\mathcal{A}^-$.
We begin by giving a possible description of $\mathcal{A}^+$ and $\mathcal{A}^-$.
If we have a set $S$ such that $\{i,j\}\not\subseteq S$, then the  graphs induced by $\GG$ and $\GG_{i\leftarrow j}$ on $S$ are identical and we can assume that no such $S$ is in either $\mathcal{A}^+$ or $\mathcal{A}^-$.
We only changed the edge $i\to j$.
So for any set $S$, the only node that could have become the child of every other node in $S$ upon reversing $i\to j$ is $i$.
Taking this as a definition of $\mathcal{A}^+$ we get that $\mathcal{A}^+$  is all sets $S$ such that $\{i, j\}\subseteq S\subseteq \pa_{\GG_{i\leftarrow j}}(i)\cup \{i\}$ and $\{i, j\}\subseteq S\not\subseteq \pa_{\GG}(i)\cup \{i\}$.
That gives us $\mathcal{A}^+=\left\{ S\colon \{i,j\}\subseteq S\subseteq \pa_\GG(i)\cup\{i,j\}\right\}=\left\{ S\cup \{i,j\}\colon S\subseteq \pa_\GG(i)\right\}$.
Similar reasoning gives us $\mathcal{A}^-=\left\{ S\cup \{i,j\}\colon S\subseteq \pa_\GG(j)\right\}$.
Note that $\mathcal{A}^+\cap\mathcal{A}^-=\{S\cup\{i,j\}\colon S\subseteq \pa_\GG(i)\cap\pa_\GG(j)\}$.

Let $S_i=\pa_\GG(i)$ and let $S_j=\pa_\GG(j)\backslash\{i\}$. 
We will now check the conditions in \Cref{def: turn pair} with respect to $(i,j,S_i,S_j)$.
$\GG$ and $\GG_{i\leftarrow j}$ have the same skeleton, say $G$. 
Conditions (1)-(3) are direct from the definition of characteristic imset as $i$ is the child of every node in $S_i=\pa_\GG(i)$, and similarly with $j$.
If $S_i\subseteq \ne_G(j)$ we have $S_i\subseteq \pa_\GG(j)$, indeed otherwise we would have $k\in S_i=\pa_\GG(i)$ such that $k\in \ch_\GG(j)$. 
This gives us the edges $i\to j\to k\to i$ in $\GG$, a contradiction as $\GG$ is a DAG. 

{\bf Case I, $S_i\subseteq \ne_G(j)$ and $S_j\subseteq \ne_G(i)$:}
We have $i\notin S_i$. 
As argued above, if $S_i\subseteq \ne_G(j)$ and $S_j\subseteq \ne_G(i)$ we get $S_i\subseteq \pa_\GG(j)\setminus\{i\}=S_j\subseteq \pa_\GG(i)=S_i$. 
In particular $\pa_\GG(j)\setminus\{i\}= \pa_\GG(i)$. 
Thus $\GG$ and $\GG_{i\leftarrow j}$ are Markov equivalent. 
This was first proved by Chickering in \cite{C95}. 
From the viewpoint of imsets, we get $\mathcal{A}^+=\mathcal{A}^-$ and thus $c_\GG=c_{\GG_{i\leftarrow j}}$.

{\bf Case II, $S_i\not\subseteq \ne_G(j)$ or $S_j\not\subseteq \ne_G(i)$:}
Condition (4) in \Cref{def: turn pair} holds by assumption.
Thus what is left is to check that $\SSS^+=\mathcal{A}^+\setminus\mathcal{A}^-$ and $\SSS^-=\mathcal{A}^-\setminus\mathcal{A}^+$.
Then by our above reasoning we get
\begin{align*}
\mathcal{A}^+\setminus\mathcal{A}^-&=\left\{ S\cup \{i,j\}\colon S\subseteq \pa_\GG(i), S\not\subseteq \pa_\GG(j)\right\}\\
&=\left\{ S\cup\{i,j\}\colon S\subseteq S_i, S\not\subseteq\ne_G(j)\right\}=\SSS^+.
\end{align*}
Similar reasoning gives us $\SSS^-=\mathcal{A}^-\setminus\mathcal{A}^+$.
\hfill\qed

For the following proofs we will use the following well-known fact.
\begin{lemma}
\label{lem: hamilton dist one edge}
Let $P$ be a 0/1-polytope. 
If $u$ and $v$ are two vertices of $P$ such that $u$ and $v$ differ by a single value. 
Then $\conv(u,v)$ is an edge of $P$. 
\end{lemma}

\subsection*{Proof of \Cref{thm: turn pair}}
By definition we have $c_\GG(\{k, i\})=1$ for all $k\in S_i$, thus $S_i\subseteq\ne_G(i)$ and similar for $S_j$. 
Note that this implies that $\SSS^+$ and $\SSS^-$ are disjoint.
If $S_i=S_j$ we have that $S_i=S_j\subseteq \ne_G(j)$, and vice versa, thus this is not a turn pair. 
By symmetry in the definition we get two cases.

{\bf Case I, $S_j\subsetneq S_i$:}
If $|S_i|=|\{k\}|=1$ we get that $c_\HH=c_\GG+e_{\{i,j, k\}}$, and thus this follows by \cref{lem: hamilton dist one edge}.
To prove the claim when $|S_i|\geq 2$, it suffices to find a cost vector $w\in\RR^{2^p-p-1}$ such that $w^Tx$ is maximized at exactly $c_\GG$ and $c_\HH$ over the vertices of $\CIM_G$. 
Since $c_\GG(\{i,k\})=1$ for all $k\in S_j$ we have $S_i\subseteq\ne_G(i)$. 
Thus $S_j\subseteq S_i\subseteq\ne_G(i)$ and we get that $\SSS^- =\emptyset$, by definition of $\SSS^-$. 
Moreover, by (4) in \cref{def: turn pair}, $S_i\not\subseteq\ne_G(j)$. 
Let $m \coloneqq |\SSS^+|$ and define the cost vector $w$ such that for $S\subseteq [p]$, with $|S|\geq 2$, $w$ satisfies
\[
w(S) = 
\begin{cases}
2	&	\text{ if $c_\GG(S) =1$}\\
1 & \text{ if $S = S_i\cup\{i,j\}$},		\\
\frac{-1}{m-1}	&	\text{ if $S\in\SSS^+\setminus\{S_i\cup\{i,j\}\}$},	\\
-2	&	\text{otherwise}.	\\
\end{cases}
\]
Notice that since $|S_i|\geq2$ we have $m\geq 2$ so this is indeed well defined.
Then we have $w^Tc_\GG = w^Tc_\HH$ since 
\begin{equation*}
\begin{split}
w^Tc_\HH
&= w^T\left(c_\GG+\sum_{S\in\SSS^+}e_S-\sum_{S\in\SSS^-}e_S\right)	\\
&= w^Tc_\GG+w(S_i\cup\{i,j\})(1)+\sum_{S\in\SSS^+\setminus\{S_i\cup\{i,j\}\}}w(S)= w^Tc_\GG.	\\
\end{split}
\end{equation*}
It then remains to check that $w^Tc_\DD<w^Tc_\GG$ for any DAG  $\DD$ with skeleton $G$ and $\DD$ not Markov equivalent to $\GG$ or $\HH$.  

Let us denote $\mathcal{A}^+\coloneqq \{S\colon w(S)=2\}$ and $\mathcal{A}^-\coloneqq\{S\colon w(S)=-2\}$. 
For all $0/1$-vectors $v$ we have 
\begin{align*}
w^Tv&=w^T\sum_{S\in\mathcal{A}^+\colon v(S)=1}e_S+w^T\sum_{S\in\mathcal{A}^-\colon v(S)=1}e_S+w^T\sum_{S\in\SSS^+\colon v(S)=1}e_S\\
&=2\left|\left\{S\in\mathcal{A}^+\colon v(S)=1\right\}\right|-2\left|\left\{S\in\mathcal{A}^-\colon v(S)=1\right\}\right|+w^T\sum_{S\in\SSS^+\colon v(S)=1}e_S.
\end{align*}
Noting that $c_\GG(S)=c_\HH(S)=1$ for all $S\in\mathcal{A}^+$, $c_\GG(S)=c_\HH(S)=0$ for all $S\in\mathcal{A}^-$ and that 
$
-1\leq w^T\sum_{S\in\SSS^+\colon v(S)=1}e_S\leq 1
$
we immediately get that $w^Tv<w^Tc_\GG$ whenever we have that $\left\{S\in\mathcal{A}^+\colon v(S)=1\right\}\neq \mathcal{A}^+$ or $\left\{S\in\mathcal{A}^-\colon v(S)=0\right\}\neq \mathcal{A}^-$.
Then as $\left\{S\subseteq[p]\colon |S|\geq 2\right\}=\mathcal{A}^+\cup\mathcal{A}^-\cup\SSS^+$ we can assume that $c_\DD(S)=c_\GG(S)$ whenever $S\notin\SSS^+$.
In particular $\DD$ must have the same skeleton as $\GG$ and $\HH$. 

Since $\DD$ was assumed to not be Markov equivalent to $\GG$ we have the following cases:
\begin{enumerate}[label=(\arabic*)]
\item{$c_\DD(S_i\cup\{i,j\})=0$ and for some set $S\in\SSS^+\backslash \{S_i\cup\{i,j\}\}$ we have $c_\DD(S)=1$, or}
\item{$c_\DD(S_i\cup\{i,j\})=1$.}
\end{enumerate}

In case (1) it follows immediately that $w^Tc_\DD\leq w^Tc_\GG+\frac{-1}{m-1}<w^Tc_\GG$.

As for case (2), by definition of the characteristic imset we have a node $n$ such that $x\to n$ in $\DD$ for all $x\in \left(S_i\cup\{i,j\}\right)\backslash\{n\}$. 
If $n= j$ we get $S_i\subseteq \ne_G(j)$, but this cannot happen by (4) in \cref{def: turn pair}. 
If $n=i$ we get that $c_\DD(S)=1$ for all $S\in \SSS^+$, and thus $\DD$ is Markov equivalent to $\HH$. 
Thus the only case left is that $n\in S_i$.

As $S_i\not\subseteq\ne_G(j)$ we have $S_i \cup\{i,j\}\in\SSS^+$.
Then, as $j\to n$ in $\DD$, there must exist a node $k\notin\{i,j,n\}$ such that $k$ is not a neighbour of $j$ in $G$.
Since $\{j,k\}\subseteq S_i\cup\{i,j\}\subseteq \pa_\DD(n)\cup \{n\}$ we get $c_\DD(\{j,n,k\})=1$.
As $i\notin\{j,n,k\}$, $\{j,n,k\}\notin \SSS^+$.
Thus we must have that $1=c_\DD(\{j,n,k\})=c_\GG(\{j,n,k\})=c_\HH(\{j,n,k\})$. 
That is $\{j,n,k\}$ is a v-structure in $\DD$, $\GG$ and $\HH$. 
We have that $c_\GG(\{i,j,k\})=0$ since $\{i,j,k\}\in\SSS^+$.
Thus, since $\GG$ is acyclic, it follows that $i\to n$ in $\GG$ as well.
In the terminology used in \cite{AMP97}, $i\to n$ will be strongly protected in $\GG$.
Hence $n$ is a child of $i$, $j$ and $k$ in $\GG$, so $c_\GG(\{i,j,n,k\})=1$.
But $\{i,j,n,k\}\in\SSS^+$, a contradiction.

{\bf Case II, $S_i\not\subseteq S_j$ and $S_j\not\subseteq S_i$:}
Here we will use a different cost vector. 
Let $m^+\coloneqq |\SSS^+|$ and $m^-\coloneqq|\SSS^-|$. 
If $m^+, m^-\geq2$ define 
\[
w(S) =
\begin{cases}
5	&	\text{ if $c_\GG(S) = c_\HH(S)=1$},	\\
2	&	\text{ if $S=S_i\cup\{i,j\}$ or $S=S_j\cup\{i,j\}$}\\
\frac{-1}{m^+-1} 	&	\text{ if $S \in \SSS^+\setminus \{S_i\cup\{i,j\}\}$}, 	\\
\frac{-1}{m^--1} 	&	\text{ if $S \in \SSS^-\setminus \{S_j\cup\{i,j\}\}$}, 	\\
-5	&	\text{ if $c_\GG(S) = c_\HH(S)=0$},	\\
\end{cases}
\]

If $|\SSS^+|=1$ we have that $\SSS^+ =\{S_i\cup\{i,j\}\}$, and thus we let $w(S_i\cup\{i,j\})=1$. 
Likewise, if $|\SSS^-|=1$ we have that  $\SSS^- =\{S_j\cup\{i,j\}\}$, and we let $w(S_j\cup\{i,j\})=1$. 
Otherwise let $w$ be as above.
Thus, by definition of $w$, we have 
$
\sum_{S\in\SSS^+}w(S)=\sum_{S\in\SSS^-}w(S)=1. 
$
To see $w^Tc_\HH=w^Tc_\GG$, note that
\begin{align*}
w^Tc_\HH-w^Tc_\GG=& w^T\left( c_\GG +\sum_{S\in \SSS^+}e_S - \sum_{S\in \SSS^-}e_S\right)-w^Tc_\GG\\
=&\sum_{S\in \SSS^+\setminus \SSS^-}w(S) - \sum_{S\in S \in \SSS^-\setminus \SSS^+}w(S)=0.
\end{align*}
So left to show is that for any DAG $\DD$ with skeleton $G$ we have $w^Tc_\DD < w^Tc_\GG$ if $c_\DD$ is neither $c_\GG$ or $c_\HH$.

As in case I we let $\mathcal{A}^+\coloneqq \left\{S\colon w(S)=5\right\}=\{S\colon c_\GG(S)=c_\HH(S)=1\}$ and  $\mathcal{A}^-\coloneqq \left\{S\colon w(S)=-5\right\}=\{S\colon c_\GG(S)=c_\HH(S)=0\}$.
As in case I we have for any $0/1$ vector $v$  
\begin{align*}
w^Tv=\ &
5\left|\left\{S\in\mathcal{A}^+\colon v(S)=1\right\}\right|-5\left|\left\{S\in\mathcal{A}^-\colon v(S)=1\right\}\right|\\
&+w^T\sum_{S\in\SSS^+\colon v(S)=1}e_S+w^T\sum_{S\in\SSS^-\colon v(S)=1}e_S.\\
\end{align*}
We also have that
$
-1\leq w^T\sum_{S\in\SSS^+\colon v(S)=1}e_S\leq 2
$
and
$
-1\leq w^T\sum_{S\in\SSS^-\colon v(S)=1}e_S\leq 2.
$
We immediately get that $w^Tv<w^Tc_\GG$ whenever we have that $\left\{S\in\mathcal{A}^+\colon v(S)=1\right\}\neq \mathcal{A}^+$ or $\left\{S\in\mathcal{A}^-\colon v(S)=0\right\}\neq \mathcal{A}^-$. 
Thus we can assume that $c_\DD(S)=c_\GG(S)$ whenever $c_\GG(S)=c_\HH(S)$.

If $c_\DD(S_i\cup\{i,j\})=c_\DD(S_j\cup\{i,j\})=0$ then it follows that $w^Tc_\DD\leq 5|\mathcal{A}^+|<5|\mathcal{A}^+|+1=c_\GG$. 
Thus for $w^Tc_\DD\geq w^Tc_\GG$ to be true we must have $c_\DD(S_i\cup\{i,j\})=1$ or $c_\DD(S_j\cup\{i,j\})=1$. 
By symmetry we can assume $c_\DD(S_i\cup\{i,j\})=1$.

Thus there exists $n_i\in S_i$ such that $S_i\cup\{i,j\}\subseteq\pa_\DD(n_i)\cup\{n_i\}$.
We cannot have $n_i=j$ as that would give us $S_i\subseteq\ne_G(j)$, and by the same reasoning there must exist a node $k_i\in S_i\setminus\ne_G(j)$. 
Then we have two cases $n_i\neq i$ and $n_i=i$.

If $n_i\neq i$ we have that $c_\DD(\{n_i,k_i,j\})=1$. 
As $\{n_i,k_i,j\}\notin\SSS^+\cup\SSS^-$ we get $c_\DD(\{n_i,k_i,j\})=c_\GG(\{n_i,k_i,j\})=c_\HH(\{n_i,k_i,j\})=1$.
Then by acyclicity we get $c_\GG(\{n_i,k_i,i,j\})=1$.
But as $\{n_i,k_i,i,j\}\in\SSS^+\backslash\SSS^-$ we get $c_\GG(\{n_i,k_i,i,j\})=0$, a contradiction. 

Thus $n_i=i$.
Then, by definition, it follows that $c_\DD(S)=1$ for all $S\in\SSS^+$. 
If $c_\DD(S_j\cup\{i,j\})=1$ we can in the same way argue that the corresponding $n_j=j$ and thus that $c_\DD(S)=1$ for all $S\in\SSS^-$.
More specifically we get that we have the following two graphs induced in $\DD$, $i \to j\leftarrow k_j$ and $k_i\to i\leftarrow j$. 
A contradiction, thus if $c_\DD(S_i\cup\{i,j\})=1$ we have $c_\DD(S_j\cup\{i,j\})=0$. 

In conclusion, we assumed that $w^Tc_\DD\geq w^Tc_\GG$ and deduced that we cannot have both $c_\DD(S_i\cup\{i,j\})=1$ and $c_\DD(S_j\cup\{i,j\})=1$. With that assumption it also followed that if $c_\DD(S_i\cup\{i,j\})=1$ then $c_\DD(S)=c_\GG(S)$ for all $S$. By symmetry, if $c_\DD(S_j\cup\{i,j\})=1$ then $c_\DD(S)=c_\HH(S)$ for all $S$. The result follows.
\hfill\qed


\subsection*{Proof of \Cref{prop: edge addition}}
We begin to characterize all sets $S$ such that $c_\GG(S)\neq c_{\GG_{+i\leftarrow j}}(S)$. 
For any $S\subseteq [p]$ and $k\neq i$ we have that $k\in S\subseteq \pa_\GG(k)\cup\{k\}$ if and only if $k\in S\subseteq \pa_{\GG_{+i\leftarrow j}}(k)\cup\{k\}$. 
This is because $\pa_\GG(k)=\pa_{\GG_{+i\leftarrow j}}(k)$ for all such $k$. 
As the value of $c_\GG(S)$ and $c_{\GG_{+i\leftarrow j}}(S)$ is determined by this property the only case where we can have $c_{\GG}(S)\neq c_{\GG_{+i\leftarrow j}}(S)$ is for sets such that $i\in S\not\subseteq\pa_\GG(i)\cup\{i\}$ or $i\in S\subseteq\pa_{\GG_{+i\leftarrow j}}(i)\cup\{i\}$.

Moreover, for any $S$ such that $\{i,j\}\centernot\subseteq S$ we have that the induced subgraphs of $\GG$ and $\GG_{+i\leftarrow j}$ are identical. 
Thus $c_{\GG}(S)=c_{\GG_{+i\leftarrow j}}(S)$ for all such $S$. 
This together with the fact that $\pa_\GG(i)\cup\{j\}=\pa_{\GG_{+i\leftarrow j}}(i)$ tells us that the only sets of interest are $\{i,j\}\subseteq S\subseteq \pa_\GG(i)\cup\{i,j\}$.

We claim that $c_{\GG_{+i\leftarrow j}}(S)=1$ and $c_\GG(S)=0$ for all $S$ such that $\{i,j\}\subseteq S\subseteq \pa_\GG(i)\cup\{i,j\}$, making this an edge pair with respect to $(i,j,S^\ast)$ where $S^\ast = \pa_\GG(i)$.
It follows that $c_{\GG_{+i\leftarrow j}}(S)=1$ for all such $S$ since $i\in S\subseteq  \pa_\GG(i)\cup\{i,j\} =  \pa_{\GG_{+i\leftarrow j}}(i)\cup\{i\}$.
Suppose $S$ is such that $\{i,j\}\subseteq S\subseteq \pa_\GG(i)\cup\{i,j\}$.
Any $k\in S\setminus\{i,j\}$ must be a parent of $i$ in $\GG$, since we cannot have $i\in S\subseteq \pa_\GG(k)\cup\{k\}$.
As $i$ and $j$ are not adjacent neither can be the parent of the other. 
Hence no node in $S$ can be the parent of all other nodes in $S$, and it follows that $c_\GG(S) = 0$. 
Condition (1) in \Cref{def: edge pair} follows since $i$ was not a neighbor of $j$ in $\GG$, and condition (2) follows since we choose $S^\ast$ to be $\pa_\GG(i)$.
\hfill\qed

\subsection*{Proof of \Cref{thm: edge pair}}
If $|S^\ast |=0$ we get $|\SSS_{+i\leftarrow j}|=1$, thus this follows by \Cref{lem: hamilton dist one edge}.
Hence we can assume that $|S^\ast |>0$. 
We partition the elements in $S^\ast$ based on if they are adjacent to $j$ in $\GG$ or not.
So let $X=\{x\in S^\ast \colon c_\GG(\{x,j\})=0\}$ and $Y=\{y\in S^\ast \colon c_\GG(\{y,j\})=1\}$. 
Define $s\coloneqq|X|$ and $t\coloneqq|Y|$. 
We treat the cases when $X\neq\emptyset$ and $X=\emptyset$ separately.
\\
\noindent
{\bf Case I, $X\neq \emptyset$:}
Let $M\coloneqq 2^{s+t}-(s+t+1)$ and notice that $\left|\{S\in \SSS_{+i\leftarrow j}, |S|\ge 4\}\right|=M$.
If $M=0$ we get $s=1$ and $t=0$ as $|X|=s$.
In this case $c_\GG$ and $c_\HH$ only differ in the coordinates $\{i, j\}$ and $\{i,j\}\cup X$. 
We claim that $c_\GG + e_{\{i,j\}\cup X}$ is not a valid imset as $\{i,j\}\cup X$ is not connected in the skeleton of $\GG$. 
Hence $c_\GG(S)$, $c_\HH(S)$ and at most one more vertex in $\CIM_p$ form a face of $\CIM_p$. 
It follows that $\conv(c_\GG, c_\HH)$ is an edge in this case.

If $M>0$ we can define the following objective function $w$ to prove that $\conv(c_\GG, c_\HH)$ is an edge of $\CIM_p$:
\[
w(S) = 
\begin{cases}
t+2	&	\text{ if $c_\GG(S) =1$}\\
-1	&	\text{ if $S=\{i,j\}$},	\\
-1    & \text{ if $S =\{i,j,y\}$, some $y\in Y$},		\\
\frac{1}{s}(t+\frac12)    & \text{ if $S =\{i,j,x\}$, some $x\in X$},		\\
\frac{1}{2M}	&	\text{ if $S\in \SSS_{+i\leftarrow j}, |S|\ge 4$}, \\
-(t+2)	&	\text{otherwise}.	\\
\end{cases}
\]

The negative weights for $S\in \SSS_{+i\leftarrow j} $ sum to $-(t+1)$ and the positive to $t+1$. 
Since the imsets differ exactly on $\SSS_{+i\leftarrow j}$, for which $w$ sum to 0, we get $w^T c_\HH=w ^T c_\GG$. 
Assume we have a DAG $\DD$ such that $w^T c_\DD\ge w ^T c_\GG$.
Then it must be that $c_\DD(S)=1$ if $c_\GG(S)=1$ and $c_\DD(S)=0$ if $c_\HH(S)=0$.
If $c_\DD(S)=0$ for all $S\in \SSS_{+i\leftarrow j}$ then $c_\DD=c_\GG$, so we can assume that is not the case.
Such a DAG $\DD$ must thus pick up some of the positive weights in $\SSS_{+i\leftarrow j}$. 
There are two possibilities to consider. 
First, if $c_\DD(\{i,j,x\})=1$ for some $x\in X$, then, by definition of $c_\DD$, $\DD$ must have v-structure $x\to i \leftarrow j$, since we know there is no edge between $x$ and $j$. 
Therefore we must have $c_\DD(\{i,j\})=1$, and it follows that $c_\DD(\{i,j,y\})=1$, for all $y\in Y$, since $y$ is adjacent to both $i$ and $j$.
Thus $w^T c_\DD$ picks up all the negative weights in $\SSS_{+i\leftarrow j}$.
To then get $w^T c_\DD\geq w ^T c_\GG$, we must have $c_\DD(S)=c_\HH(S)$ for all $S$.
Therefore, $\DD$ is Markov equivalent to $\HH$ by \Cref{lem: studeny}.

Second, if $c_\DD(\{i,j,x\})=0$ for all $x\in X$, but $c_\DD(S)=1$, for some $S\in \SSS_{+i\leftarrow j}, |S|\ge 4$, 
then by definition there exists $k\in S$ with $S\subseteq \pa_\DD(k)\cup \{k\}$.  
If $k\in\{i,j\}$ we immediately get $c_\DD(\{i,j\})=1$. 
Otherwise we have $c_\DD(\{i,j,k\})=1$, and since there is no edge between $j$ and elements in $X$ we know that $k\in Y$.
In either case $w^T c_\DD$ picks up a $-1$. 
The sum of the positive weights $w(S)$ for $S\in \SSS_{+i\leftarrow j}, |S|\ge 4$ is only $\sfrac{1}{2}$ and we cannot have $w^T c_\DD\ge w^T c_\GG$.

\noindent
{\bf Case II, $X= \emptyset$:} 
If $M=0$, either $s+t=1$ or $s+t=0$.  The latter implies $S^\ast = \emptyset$, which is dealt with above.  For the former, we get $s=0$ and $t=1$. 
We claim that $c_\GG+e_{\{i,j\}}$ is not a valid characteristic imset for any DAG, since $\{i,j\}\cup Y$ is complete in the skeleton of $\HH$. Similar to Case I it follows $\conv(c_\GG,c_\HH)$ is an edge.

If $M>0$ we now use the following objective function $w$ in order to prove that $\conv(c_\GG, c_\HH)$ is an edge of $\CIM_p$:
\[
w(S) = 
\begin{cases}
t+1	&	\text{ if $c_\GG(S) =1$}\\
t-\frac12	&	\text{ if $S=\{i,j\}$},	\\
-1    & \text{ if $S =\{i,j,y\}$, some $y\in Y$},		\\
\frac{1}{2M}	&	\text{ if $S\in \SSS_{+i\leftarrow j}, |S|\ge 4$}, \\
-(t+1)	&	\text{otherwise}.	\\
\end{cases}
\]
Here $s=0$, so $M=2^t-t-1$.
The reasoning is very similar to Case I. The negative weights for $S\in \SSS_{+i\leftarrow j} $ sum to $-t$ and the positive to $t$. Thus, $w^T c_\HH=w^T c_\GG$, and again
if another DAG $\DD$ were to have $w^T c_\DD\ge w^T c_\GG$, then it must have $c_\DD(S)=1$ if $c_\GG(S)=1$ and $c_\DD(S)=0$ if $c_\HH(S)=0$.
There are two possibilities to consider. 
First, if $c_\DD(\{i,j\})=1$, then $\DD$ has  triangles on every $\{i,j,y\}$ and therefore $c_\DD(\{i,j,y\})=1$, for all $y\in Y$. Thus $\DD$ picks up all the $-t$ negative weights and the only possibility is $c_\DD=c_\HH$.
The second possibility is that $c_\DD(\{i,j\})=0$ but $c_\DD(S)=1$, for some $S\in \SSS_{+i\leftarrow j}, |S|\ge 4$, then by definition there exists $k\in S$ with $S\subseteq \pa_\DD(k)\cup \{k\}$.  
As $i$ and $j$ are not adjacent we get $k\centernot\in \{i,j\}$.
This implies that $c_\DD(\{i,j,k\})=1$, for $k\in Y$, which gives a $-1$ in $w^T c_\DD$.
The sum of the positive weights $w(S)$ for $S\in \SSS_{+i\leftarrow j}, |S|\ge 4$ is $\sfrac12$ and thus we cannot have $w^T c_\DD\ge w^T c_\GG$.
\hfill\qed

\section{The Turn Phase and the Edge Phase Algorithms}
\label{sec: algorithms}
Here we present the pseudocode for the edge phase and turn phase used in \Cref{alg: ske greedy cim} and \Cref{alg: greedy cim}.
The edge phase and turn phase algorithms are presented in \Cref{alg: edge phase} and \Cref{alg: turn phase}, respectively.

\begin{algorithm}[t!]
\caption{Edge phase}
\label{alg: edge phase}
\raggedright
\textbf{Input:}{ An imset $c_\GG$ corresponding to a DAG $\GG$. Data ${\bf D}$.}\\
\textbf{Output:}{ A characteristic imset $c_\GG$ where $\GG$ is a DAG.}
\begin{algorithmic}
\State{Let $G$ be the skeleton of $\GG$}
\State{$check \leftarrow \texttt{true}$}
\While{$check$}
    \State{$check \leftarrow \texttt{false}$}
    \For{$i,j\in [p]$}
        \For{$S^\ast\subseteq \ne_G(i)$}
            \If{We have a DAG $\HH$ such that $\{\GG, \HH\}$ is an edge pair with respect to $(i,j,S^\ast)$}
                \If{$\BIC(\HH,{\bf D}) > \BIC(\GG,{\bf D})$}
                    \State{$c_\GG\gets c_\HH$}
                    \State{Let $G$ be the skeleton of $\GG$}
                    \State{$check \leftarrow \texttt{true}$}
                    \State{{\bf break}}
                \EndIf
            \EndIf
        \EndFor
    \EndFor
\EndWhile\\
\Return{$c_\DD$}
\end{algorithmic}
\end{algorithm}

\begin{algorithm}[t!]
\caption{Turn phase}
\label{alg: turn phase}
\raggedright
\textbf{Input:}{ An imset $c_\GG$ corresponding to a DAG $\GG$. Data ${\bf D}$.}\\
\textbf{Output:}{ A characteristic imset $c_\GG$ where $\GG$ is a DAG.}
\begin{algorithmic}
\State{$c_\DD\leftarrow c_\GG$}
\State{Let $G$ be the skeleton of $\GG$}
\State{$check \leftarrow \texttt{true}$}
\While{$check$}
    \State{$check \leftarrow \texttt{false}$}
    \For{$i,j\in [p]$}
        \For{$S_i\subseteq \ne_G(i)$ and $S_j\subseteq \ne_G(j)$}
            \If{We have a DAG $\HH$ such that $\{\DD, \HH\}$ is an turn pair with respect to $(i,j,S_i, S_j)$}
                \If{$\BIC(\HH,{\bf D}) > \BIC(\DD,{\bf D})$}
                    \State{$c_\DD\gets c_\HH$}
                    \State{$check \leftarrow \texttt{true}$}
                \EndIf
            \EndIf
        \EndFor
    \EndFor
\EndWhile\\
\Return{$c_\GG$}
\end{algorithmic}
\end{algorithm}

\end{document}